\newcommand{\cover}{\lessdot}
\newcommand{\ree}[1]{(\ref{#1})}
\newtheorem{thm}{Theorem}[section]
\newtheorem{lem}[thm]{Lemma}
\newtheorem{cor}[thm]{Corollary}
\newtheorem{prop}[thm]{Proposition}
\theoremstyle{definition}
\newtheorem{defi}[thm]{Definition}
\DeclareMathOperator{\supp}{supp}
\newcommand{\bfm}[1]{\boldsymbol{#1}}
\begin{document}
\title{Applications of Quotient Posets}
\author{Joshua Hallam \\Department of Mathematics\\Michigan State University
\\ East Lansing, MI 48824-1027, USA,}
\maketitle
\noindent  Keywords: characteristic polynomial, factorization, M\"obius function, Tamari lattice, quotient poset
 \begin{abstract}
In this paper we consider the characteristic polynomial of not necessarily ranked posets.  We do so by allowing the rank to be an arbitrary function from the poset to the nonnegative integers.  We will prove two results showing that the characteristic polynomial of a poset has nonnegative integral roots.  Our factorization theorems will then be used to show that any interval of the Tamari lattice has a characteristic polynomial which factors  in this way.  Blass and Sagan's result about LL lattices will also be shown to be a consequence of our factorization theorems.  Finally we will use quotient posets to give unified proofs of some classic M\"obius function results.
\end{abstract}

\section{Introduction}
All the posets we will consider here will be finite and contain a minimum element which will be denoted by $\hat{0}$.  Our focus will be on the (one-variable) M\"obius function and its generating function, the characteristic polynomial.  In particular, we will give some new  theorems about when the characteristic polynomial of a poset has nonnegative integer roots.  Additionally, we will introduce a new method for proving some of the classic results  about the M\"obius function.  We begin with a review of the M\"obius function and the characteristic polynomial.

Here we will use $\mathbb{Z}$ to denote the set of  integers and $\mathbb{N}$ to be the set of nonnegative integers.   Given a poset $P$, the M\"obius function $\mu: P \rightarrow \mathbb{Z}$ is defined as the unique function on $P$ such that
$$
\sum_{y\leq x} \mu(y) = \delta_{\hat{0},x}
$$
where $\delta_{\hat{0},x}$ is the Kronecker delta function.

We say a poset, $P$, is \emph{ranked} if, for  each  $x\in P$, every saturated $\hat{0}$--$x$  chain  has the same length.  If a poset is ranked, the  \emph{rank function} $\rho:P\rightarrow \mathbb{N}$  is  given by setting $\rho(x)$ to be the length of a $\hat{0}$--$x$ chain.   

In the standard definition of the characteristic polynomial, one must have a ranked poset.   However, to enlarge the set of posets we can consider,  we will instead replace the rank function with any map $\rho:P\rightarrow \mathbb{N}$.  While $\rho$ is arbitrary here, certain conditions may be imposed on the function by the hypotheses of the various theorems considered later.  

Given a $\rho:P\rightarrow \mathbb{N}$ we will define the \emph{rank} of the poset as
$$
\rho(P) = \max_{x\in P} \rho(x).
$$
We are now in a position to define the generating function for  $\mu$. Let $P$ be a poset, the \emph{characteristic polynomial with respect to $\rho$ and $m$} is defined by
\begin{equation}\label{defChiEq}
\chi(P,t) = \sum_{x\in P} \mu(x) t^{m-\rho(x)}
\end{equation}
where $m$ is some integer with $m\geq \rho(P)$. 

 Although most of the results concerning the characteristic polynomial we present in this paper  will be true regardless  of whether the poset is ranked or not, we may from time to time need to assume that the poset is ranked.  In the case when $\rho$ is the normal rank function and $m=\rho(P)$ we will use the name \emph{classic characteristic polynomial} to distinguish from the more general definition.

Let us do an example and calculate the characteristic polynomial of an unranked poset.   We will consider the \emph{Tamari lattices}~\cite{ft:pa, ht:pa}, which will be denoted by $T_n$.  One way to define $T_n$ is as the set of parenthesizations of the word $x_1x_2 \cdots x_{n+1}$ with ordering   given by saying $\pi$ is covered by $\sigma$ if there exists subwords $A,B,$ and $C$ such that
$$
\pi = \dots ((AB)C)\dots \hspace{10 pt}\mbox{and}\hspace{10 pt}  \sigma = \dots(A(BC))\dots
$$
 Figure~\ref{t3Fig} displays the Hasse diagrams for $T_3$. 

\begin{figure}
\begin{center}
\begin{tikzpicture}[scale=1.5]
\node (0) at (1,0) {$(((x_1x_2)x_3)x_4)$};
\node (a) at (0,1.5) {$((x_1x_2)(x_3x_4))$}; 
\node (b) at (2,1) {$((x_1(x_2x_3))(x_4)$}; 
 
\node (d) at (2,2)  {$(x_1((x_2x_3)(x_4))$}; 
\node (1) at (1,3) {$(x_1(x_2(x_3x_4)))$}; 
\draw(0)--(a)--(1);
\draw(0)--(b)--(d)--(1);
\end{tikzpicture}
\caption{The Tamari Lattice $T_3$} \label{t3Fig}
\end{center}
\end{figure}
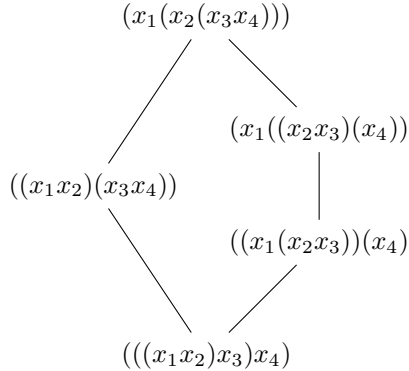

As one can see from the Hasse diagram, $T_3$ is not ranked.  In order to calculate the characteristic polynomial for $T_3$ we need a function, $\rho$.  We will use generalized rank  which was introduced in~\cite{bs:mfl}.  To define generalized rank, let us set up some notation.  The atom set of $P$ will be denoted by $A(P)$.  Additionally, given an $x\in P$ we will use $A_x$ to denote the set of atoms below $x$ in $P$.  If  $(A_1,A_2, \dots, A_n)$ is an ordered partition of the atoms of $P$, the \emph{generalized rank} of an element $x$ is given by
\begin{equation}
\label{genrho}
\rho(x) = |\{i\ :\  A_i \cap A_x\neq \emptyset\}|.
\end{equation}
In other words, $\rho(x)$ counts the number of blocks in the partition that $x$ is above.  

Returning to the $T_3$ example, let us partition the atoms into
 $$
A_1= \{((x_1x_2)(x_3x_4))\},
A_2=\{((x_1(x_2x_3))(x_4)\}).
$$
Given this partition, we see that the generalized rank of the bottom element is 0, the three middle elements all have generalized rank 1 and the top element has generalized rank 2.   We take $m=3$ which is the the length of the longest chain in $T_3$.   Using the definition of the characteristic polynomial (equation~\ree{defChiEq}), we get
$$
\chi(T_3,t) = t^3-2t^2+t= t(t-1)^2.
$$

We see that $\chi(T_3,t)$ factors with  roots 0 and 1.  Recalling the well known fact that if $P$, $Q$ are ranked then $\chi(P\times Q,t) =\chi(P,t)\chi(Q,t)$ one might ask if we can decompose $T_3$ into the product of two smaller posets. Using this reasoning, we might guess that $T_3$ is the product of two chains since chains have characteristic polynomials with roots 0 and 1.  Of course, this cannot be the case since chains are ranked and so their products are too, but $T_3$ is not ranked.  However,  it is possible to take the product of the chains, collapse elements in the Hasse diagram  without changing the characteristic polynomial and  also  get a poset isomorphic to $T_3$.

For some  posets the characteristic polynomial factors with nonnegative integer roots.  In~\cite{hs:fcp} a class of ranked lattices with this factorization was considered.  We wish to give a generalization of these results to arbitrary finite posets with a minimum element.  Many of the theorems from~\cite{hs:fcp} still hold true at this level of generality, but we will need to develop some more concepts in order to show this.

In the next section, we review the idea of homogeneous quotient posets and how they apply to the M\"obius function.     Section~\ref{tranFuncSec} contains material about transversal functions and presents the first factorization theorem.  We consider a specific type of transversal function in  Section~\ref{compTranFuncSec}.  This new type of function allows us to show another factorization theorem. We apply our factorization results to show a new family of lattices have characteristic polynomials which factor in section~\ref{tamSec}. We also prove Blass and Sagan's~\cite{bs:mfl} result about LL lattices in this section.  Section~\ref{mobFunSec} is concerned with using quotient posets to derive some classic results about the M\"obius function.  We finish with a section on future work.

\section{Quotient Posets}\label{quoPosSec}
We wish to order the classes of an equivalence relation on a poset.  We recall some definitions from~\cite{hs:fcp}.

\begin{defi} \label{quoPosetDef}
Let $P$ be a poset and let $\sim$ be an equivalence relation on $P$. We define the {\em quotient} $P/\sim$ to be the set of equivalence classes with the binary relation $\leq$ defined by $X\leq Y$ in $P/\sim$ if and only if $x\leq y$ in $P$ for some $x\in X$ and some $y\in Y$. 
\end{defi}
 Quotients of posets are not necessarily posets.   For example, take a 3-element chain and identify the bottom and top elements.   The relation you obtain is reflexive and transitive, but   not antisymmetric.   In order to guarantee we get a poset when we take a quotient, we require two more properties.

\begin{defi} \label{homogenQuoDefi}
Let $P$ be a poset and let $\sim$ be an equivalence relation on $P$. Order the equivalence classes as in the previous definition. We say the poset $P/\sim$ is a \emph{homogeneous quotient} if
\begin{itemize}
\item[(1)] $\hat{0}$ is in an equivalence class by itself, and 
\item[(2)] if $X\leq Y$ in $P/\sim$, then for all $x\in X$ there is a $y\in Y$ such that $x\leq y$.
\end{itemize}
\end{defi}

It was shown in~\cite{hs:fcp} that homogeneous quotients of finite posets are posets.  Moreover, it was also shown how the   M\"obius function behaved when taking quotients.   We describe this next.

We say that a homogeneous quotient $P/\sim$ satisfies the \emph{summation condition} if for all nonzero $X\in P/\sim$,
\begin{equation}\label{sumCondEq}
\sum_{y \in L(X)} \mu(y) = 0
\end{equation}
where $L(X)$ is the lower order ideal generated by $X$ in $P$. 
This definition leads us to our first lemma.

\begin{lem}[\cite{hs:fcp}]\label{sumLem}
Let $P/\sim$ be a homogeneous quotient poset which satisfies the summation condition.  Then, for all equivalence classes $X$
\begin{displaymath}
\mu(X) = \sum_{x\in X} \mu(x).
\end{displaymath}
\end{lem}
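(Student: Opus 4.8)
The plan is to verify that the function $\nu(X) := \sum_{x\in X}\mu(x)$ satisfies the defining recurrence of the M\"obius function of $P/\sim$, and then invoke uniqueness. Recall that $\mu_{P/\sim}$ is the unique function on $P/\sim$ with $\sum_{Y\le X}\mu_{P/\sim}(Y) = \delta_{\hat 0,X}$ for every class $X$; hence it suffices to show $\nu$ obeys the same identity. (No genuine induction is needed, although one can think of the uniqueness as proved by induction on $|\{Y : Y\le X\}|$.)

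First I would identify the bottom of $P/\sim$. By part (1) of Definition~\ref{homogenQuoDefi} the class of $\hat 0$ is $\{\hat 0\}$, and since $\hat 0\le x$ for every $x\in P$, this class lies below every class in $P/\sim$; so it is the $\hat 0$ of the quotient. Then $\nu(\{\hat 0\}) = \mu(\hat 0) = 1$, which is the value the recurrence demands at the bottom class.

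The heart of the argument is the disjoint-union identity $\bigsqcup_{Y\le X} Y = L(X)$ for each class $X$, where on the left we take the union over all classes $Y$ with $Y\le X$ in $P/\sim$ (these are pairwise disjoint, being equivalence classes). The inclusion $L(X)\subseteq \bigsqcup_{Y\le X}Y$ is immediate from Definition~\ref{quoPosetDef}: if $y\le x$ with $x\in X$, then the class of $y$ satisfies $\le X$ in the quotient. The reverse inclusion is precisely where part (2) of Definition~\ref{homogenQuoDefi} is used: if $Y\le X$, then for \emph{every} $y\in Y$ there is an $x\in X$ with $y\le x$, so $y\in L(X)$. Granting this, for any $X\neq\hat 0$ we compute
\[
\sum_{Y\le X}\nu(Y) = \sum_{Y\le X}\ \sum_{y\in Y}\mu(y) = \sum_{y\in L(X)}\mu(y) = 0,
\]
the last equality being the summation condition~\eqref{sumCondEq}. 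Together with the base case this shows $\nu$ satisfies the recurrence characterizing $\mu_{P/\sim}$, so $\nu = \mu_{P/\sim}$ and the lemma follows.

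I do not anticipate a serious obstacle here; the one point requiring care is applying condition (2) with the correct orientation --- the universal quantifier ranges over elements of the \emph{lower} class --- since that is exactly what upgrades the existential clause in the definition of the quotient order to the statement needed for the reverse inclusion. The remainder is routine manipulation of finite sums.
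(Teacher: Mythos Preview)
Your argument is correct. The paper itself does not prove this lemma---it is quoted from \cite{hs:fcp}---so there is no in-paper proof to compare against; but the approach you give (verifying that $\nu(X)=\sum_{x\in X}\mu(x)$ satisfies the defining recurrence via the identity $\bigsqcup_{Y\le X}Y=L(X)$, with condition~(2) of Definition~\ref{homogenQuoDefi} supplying the nontrivial inclusion) is exactly the standard one and is what appears in the cited source.
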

 
Not only will this  lemma allow us to prove  results about the factorization of the characteristic polynomial, we will also be able to use it to prove some classic results about the M\"obius function.  We will first consider the factorization theorems.   

\section{Transversal Functions}\label{tranFuncSec}
We begin this section by reviewing the notion of a rooted tree which was used in~\cite{hs:fcp}.
\begin{defi}
Let $P$ be a poset and $S$ be a subset of $P$ which contains $\hat{0}$. Let $\mathcal{C}$ be the collection of saturated chains of $P$ which start at $\hat{0}$ and use only elements of $S$. The \emph{rooted tree with respect to $S$} is the poset obtained by ordering $\mathcal{C}$ by containment and will be denoted by $RT_S$. 
\end{defi}

First, let us note that a rooted tree contains a minimum element corresponding to the $\hat{0}$--$\hat{0}$ chain.  Additionally,  there are no cycles in the Hasse diagram of a rooted tree.  These two properties motivate the name for the poset. It also implies that only the minimum element and the atoms of the rooted tree have  nonzero  M\"obius values. 

 By definition, if $x\in RT_S$, then $x$ is a chain of the original poset.  However, it will be useful to think of $x$ as just the top element of the  chain.  That is, we think of $x$ as just an element of the original poset.

Let us a do an example of constructing rooted trees.    Consider the Tamari lattice, $T_3$, that was shown previously in Figure~\ref{t3Fig}. We will take $S_1$ to be the upper order ideal generated by $((x_1x_2)(x_3x_4))$  together with $\hat{0}$ and $S_2$ to be the upper order ideal generated by $((x_1(x_2x_3))(x_4)$ together with $\hat{0}$.  This gives $RT_{S_1}$ and $RT_{S_2}$ as shown in  Figure~\ref{t3RootedTreeFig}.

\begin{figure}
\begin{center}
\begin{tikzpicture}[scale=1.5]
\node (0) at (1,0) {$(((x_1x_2)x_3)x_4)$};
\node (a) at (1,1) {$((x_1x_2)(x_3x_4))$}; 
\node (1) at (1,2) {$(x_1(x_2(x_3x_4)))$};  

\node (0') at (4,0) {$(((x_1x_2)x_3)x_4)$};
\node (b) at (4,1) {$((x_1(x_2x_3))(x_4)$}; 
\node (d) at (4,2)  {$(x_1((x_2x_3)(x_4))$}; 
\node (1') at (4,3) {$(x_1(x_2(x_3x_4)))$};

\draw(0)--(a)--(1);
\draw(0')--(b)--(d)--(1');
\end{tikzpicture}
\caption{The Rooted Trees $RT_{S_1}$ and $RT_{S_2}$} \label{t3RootedTreeFig}
\end{center}
\end{figure}
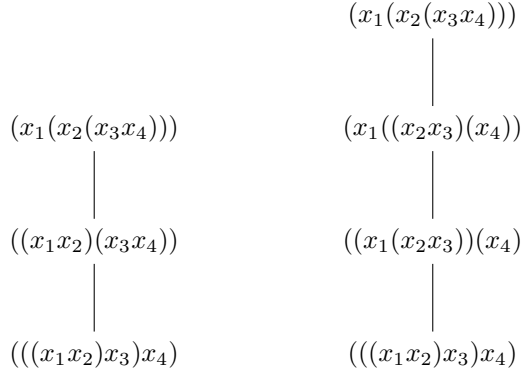

To explain factorization of the characteristic polynomial of a poset, we will take products of rooted trees and then take a quotient of this product.  We will denote elements of the product using boldface.  So if $S_1,S_2, \dots, S_n$ is a collection of subsets which contain $\hat{0}$, then a typical element of $\prod_{i=1}^n RT_{S_i}$ will be denoted $\bfm{t}=(t_1,t_2, \dots, t_n)$.

When the poset is a lattice there is a canonical choice for the equivalence relation called the \emph{standard equivalence relation} which was introduced in~\cite{hs:fcp}.  Since we are interested in posets which are not necessarily lattices we need to generalize this idea.   To do this, we quotient  out by the kernel of a  special type of map from the product of rooted trees to the poset.

\begin{defi}\label{transFuncDefi}
Let $P$ be a poset   and let  $(S_1, S_2, \dots, S_n)$ be  an ordered collection of subsets of $P$  each containing $\hat{0}$. We say $f: \prod_{i=1}^n RT_{S_i} \rightarrow P$ is a  {\it transversal function} if  it has the following properties:
\begin{enumerate}
\item The function $f$ is order preserving.
\item The function $f$ is surjective.
\item If $f(\bfm{t})=\hat{0}$, then $t_i =\hat{0}$ for all $i$.
\end{enumerate}
\end{defi}

If $f$ is a transversal function, the \emph{kernel of $f$}, denoted $\ker f$, is the equivalence relation $\sim$ given by $\bfm{s}\sim \bfm{t}$ if and only if $f(\bfm{s})=f(\bfm{t})$.  Since we will often be referring to equivalence classes and the elements of these classes we need names for these objects.

\begin{defi}
Let $P$ be a poset  and let  $(S_1, S_2, \dots, S_n)$ be  an ordered collection of subsets of $P$.   Let $f: \prod_{i=1}^n RT_{S_i} \rightarrow P$ be a transversal function.  If $\bfm{t}\in \prod_{i=1}^n RT_{S_i}$ then we say $\bfm{t}$ is a {\it transversal} for $x$ if $f(\bfm{t})=x$.  We say $\bfm{t}$ is {\it atomic} or an {\it atomic transversal} if  all the elements of $\bfm{t}$ are atoms of $RT_{S_i}$ or $\hat{0}$.   The set of all transversals for $x$ will be denoted by $\mathcal{T}_x$ and the set of all atomic transversals will be denoted by $\mathcal{T}_x^a$.  We also define the  \emph{support} of a transversal, $\bfm{t}$, as 
$$
\supp\bfm{t} = \{i :t_i\neq \hat{0}\}.
$$
\end{defi}

From the definitions it is evident that the set of equivalence classes of $ \prod_{i=1}^n RT_{S_i}/\ker f $ is 
 $\{\mathcal{T}_x : x\in P\}$.  Moreover, it is clear that the size of the  support of an atomic transversal for $x$ is also its rank  in the product of the rooted trees.   

We are now in a position to give our first factorization theorem.  The other factorization result we provide later will   be a special case of this  one.   

\begin{thm}\label{mainThm}
Let $P$ be a poset   with  $\rho: P \rightarrow \mathbb{N}$ and let $m\in \mathbb{N}$ such that $\rho(P)\leq m$. Moreover,  let $(S_1, S_2, \dots, S_n)$ be an  ordered collection of subsets of $P$ which contain $\hat{0}$ and let $f$ be a transversal function.  Suppose the following hold.
\begin{enumerate}
\item[(1)] If $x\leq y$ and $\bfm{s}\in\mathcal{T}_x$, there exists $\bfm{t}\in\mathcal{T}_y$ with $\bfm{s}\leq\bfm{t}$.
\item[(2)]  If $\bfm{t}\in \mathcal{T}_x^a$, then $|\supp \bfm{t}|=\rho(x)$.
\item[(3)] The summation condition \ree{sumCondEq} holds for all $\mathcal{T}_x$.
\end{enumerate}

We can conclude the following.
\begin{itemize}
\item[(a)] We have an isomorphism
$$
P \cong \left( \prod_{i=1}^n RT_{S_i}\right)/\ker f.
$$
\item[(b)]For each $x\in P$,
$$
\mu(x) = (-1)^{\rho(x)}|\mathcal{T}_x^a|.
$$
\item[(c)] The characteristic polynomial of $P$  with respect to $\rho$ and $m$  (equation~\ree{defChiEq}) is given by
$$
\chi(P,t)= t^{m-n}\prod_{i=1}^n (t- |A(RT_{S_i})|).
$$
\end{itemize}
\end{thm}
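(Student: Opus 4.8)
The plan is to prove (a) first, since it lets us identify $\mu(x)$ with the M\"obius value of the class $\mathcal{T}_x$ in the quotient; then deduce (b) by combining Lemma~\ref{sumLem} with the M\"obius structure of a product of rooted trees; and finally read off (c) as a short generating-function computation.

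For (a) I would first check that $\ker f$ makes $\prod_{i=1}^n RT_{S_i}$ into a \emph{homogeneous} quotient in the sense of Definition~\ref{homogenQuoDefi}. Property~(3) of a transversal function together with surjectivity forces $f^{-1}(\hat 0)=\{\bfm 0\}$, where $\bfm 0=(\hat 0,\dots,\hat 0)$ is the minimum of the product, so $\mathcal{T}_{\hat 0}$ is a singleton and Definition~\ref{homogenQuoDefi}(1) holds. For (2): if $\mathcal{T}_x\leq\mathcal{T}_y$ in the quotient then, by Definition~\ref{quoPosetDef}, some $\bfm s'\in\mathcal{T}_x$ and $\bfm t'\in\mathcal{T}_y$ satisfy $\bfm s'\leq\bfm t'$, hence $x=f(\bfm s')\leq f(\bfm t')=y$ since $f$ is order preserving; now hypothesis~(1) of the theorem says precisely that every $\bfm s\in\mathcal{T}_x$ lies below some $\bfm t\in\mathcal{T}_y$. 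By the quoted result of~\cite{hs:fcp}, the quotient is therefore a poset. The natural bijection $\bar f\colon (\prod RT_{S_i})/\ker f\to P$ given by $\mathcal{T}_x\mapsto x$ (the classes are exactly the fibers $\mathcal{T}_x$) is order preserving by the computation just made, and order reflecting because if $x\leq y$ then, picking any $\bfm s\in\mathcal{T}_x$ (nonempty by surjectivity) and applying hypothesis~(1), we get $\mathcal{T}_x\leq\mathcal{T}_y$. Thus $\bar f$ is an isomorphism, proving (a), and in particular $\mu(x)=\mu(\mathcal{T}_x)$.

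For (b) I would apply Lemma~\ref{sumLem}, whose hypotheses are met by hypothesis~(3) and the homogeneity just established, to get $\mu(\mathcal{T}_x)=\sum_{\bfm t\in\mathcal{T}_x}\mu(\bfm t)$, where $\mu$ on the right is the M\"obius function of the product. By the product formula for the M\"obius function, $\mu(\bfm t)=\prod_{i=1}^n\mu_{RT_{S_i}}(t_i)$; and since in a rooted tree only $\hat 0$ and the atoms have nonzero M\"obius value, namely $1$ and $-1$ (as every principal order ideal of a rooted tree is a chain), $\mu(\bfm t)=0$ unless $\bfm t$ is atomic, in which case $\mu(\bfm t)=(-1)^{|\supp\bfm t|}$. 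Hypothesis~(2) gives $|\supp\bfm t|=\rho(x)$ for every $\bfm t\in\mathcal{T}_x^a$, so $\mu(x)=\mu(\mathcal{T}_x)=\sum_{\bfm t\in\mathcal{T}_x^a}(-1)^{\rho(x)}=(-1)^{\rho(x)}|\mathcal{T}_x^a|$.

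For (c), substitute (b) into \ree{defChiEq}: $\chi(P,t)=\sum_{x\in P}(-1)^{\rho(x)}|\mathcal{T}_x^a|\,t^{m-\rho(x)}$. Since the sets $\mathcal{T}_x^a$, $x\in P$, partition the atomic transversals of the product and $\rho(x)=|\supp\bfm t|$ on each, this equals $\sum_{\bfm t}(-1)^{|\supp\bfm t|}t^{m-|\supp\bfm t|}$, summed over all atomic transversals $\bfm t=(t_1,\dots,t_n)$. Writing $m-|\supp\bfm t|=(m-n)+\#\{i:t_i=\hat 0\}$ and using that the $n$ coordinates are chosen independently (coordinate $i$ contributes a factor $t$ if $t_i=\hat 0$ and a factor $-1$ for each of the $|A(RT_{S_i})|$ atoms), the sum factors as $t^{m-n}\prod_{i=1}^n(t-|A(RT_{S_i})|)$, with $m\geq n$ since, e.g., a full-support atomic transversal has rank $n$. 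I do not anticipate a single hard step: the content lies in seeing that the three hypotheses are tailored so that~(1) supplies both homogeneity and the order-reflecting half of~(a), (3) supplies the summation condition needed for Lemma~\ref{sumLem}, and~(2) aligns $|\supp\bfm t|$ with $\rho(x)$; the only places calling for care are that order-reflecting direction and the fact that M\"obius values of a rooted tree vanish above the atoms, which is what collapses the sum in~(b) onto atomic transversals.
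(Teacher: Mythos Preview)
Your proposal is correct and follows essentially the same route as the paper's proof: verify homogeneity of $\ker f$ from the transversal-function axioms plus hypothesis~(1), establish the isomorphism via the induced map $\bar f$, apply Lemma~\ref{sumLem} together with the M\"obius structure of rooted trees for~(b), and then compute~(c) from~(b). The only cosmetic difference is in~(c): the paper groups terms by $\rho(x)=k$ and recognizes the inner sum as $e_k(|A(RT_{S_1})|,\dots,|A(RT_{S_n})|)$ before factoring, whereas you pass directly to a sum over all atomic transversals and factor coordinatewise---these are the same computation, and your aside that $m\ge n$ is unnecessary (and not always true if some $S_i=\{\hat 0\}$), since the formula holds formally regardless.
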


\begin{proof}
First, we need to show the quotient is a homogeneous quotient.  Conditions (2) and   (3)  in the definition  of a transversal function (Definition~\ref{transFuncDefi}) imply condition (1) of a homogeneous quotient (Definition~\ref{homogenQuoDefi}).  To show  condition (2) holds, suppose that $\mathcal{T}_x\leq \mathcal{T}_y$.  Then there is a $\bfm{q}\in\mathcal{T}_x$ and a $\bfm{r}\in\mathcal{T}_y$ with $\bfm{q}\leq \bfm{r}$.  Since $f$ is order preserving, $x=f(\bfm{q})\leq f(\bfm{r})=y$.  By assumption (1)  of the theorem, given a $\bfm{s}\in\mathcal{T}_x$ there is a $\bfm{t}\in\mathcal{T}_y$ with $\bfm{s}\leq \bfm{t}$ and so condition (2) of a homogeneous quotient is satisfied.

Now  we show (a).  Let $\bar{f}:\left( \prod_{i=1}^n RT_{S_i}\right)/\ker f \rightarrow P $ be the induced quotient map sending $\mathcal{T}_x$ to $x$.   Since $f$ is surjective, it follows easily that $\bar{f}$ is a bijection and so has an inverse say $g$.

Next we show $\bar{f}$ is order preserving.  Recall that the elements of the quotient, $ (\prod_{i=1}^nRT_{S_i})/\ker f$, are of the form $\mathcal{T}_x$ for some $x\in P$. Suppose that $\mathcal{T}_x \leq \mathcal{T}_y$.  Then again, since $f$ is order preserving,  $x\leq y$ and so $\bar{f}$ is order preserving.

To finish the proof of (a), we show $g$ is order preserving.  Suppose that $x\leq y$.  Since $f$ is surjective, $\mathcal{T}_x\neq\emptyset$. Therefore, by assumption (1), there   are $\bfm{s} \in \mathcal{T}_x$ and $\bfm{t}\in \mathcal{T}_y$ with $\bfm{s}\leq \bfm{t}$.  Using the definition of a quotient poset, we get that that $\mathcal{T}_x\leq\mathcal{T}_y$ and so $g(x)\leq g(y)$.

Now we verify (b).    By Lemma~\ref{sumLem}, assumption (3), and the fact that isomorphisms preserve M\"obius   values, we have that 
$$
\mu(x) = \sum_{\bfm{t} \in \mathcal{T}_x} \mu(\bfm{t}).
$$
Since  only atomic transversals have nonzero M\"obius value we have
$$
\mu(x) = \sum_{\bfm{t} \in \mathcal{T}_x^a} \mu(\bfm{t}).
$$
By assumption (2), all the atomic transversals have the same support size which is the rank of $x$.  It  follows that each  atomic transversal for $x$ has M\"obius value $(-1)^{\rho(x)}$. Therefore we have that
$$
\mu(x) =(-1)^{\rho(x)}|\mathcal{T}_x^a|.
$$

Finally we show (c).  By definition, 
$$
\chi(P,t) = \sum_{x\in P} \mu(x) t^{m-\rho(x)}.
$$
Using part (b), we get
$$
\chi(P,t) = \sum_{x\in P}  (-1)^{\rho(x)}|\mathcal{T}_x^a| t^{m-\rho(x)}.
$$
We can break this sum into parts, depending on the rank of $x$. Note that by assumption (2) and part (b), every element with rank larger than $n$ has M\"obius value zero.  Thus we have,
$$
\chi(P,t) = \sum_{k=0}^{n}\left(\sum_{\rho(x)=k}(-1)^k|\mathcal{T}_x^a|t^{m-k}\right).
$$
Neither $(-1)^k$ nor $t^{m-k}$ depend on $x$ so we can pull them out to get,
$$
\chi(P,t) = \sum_{k=0}^{n}(-1)^kt^{m-k}\left(\sum_{\rho(x)=k} |\mathcal{T}_x^a|\right).
$$
Using assumption (2) and denoting the $k^{th}$ elementary symmetric function as $e_k$, we  have the inner sum  is exactly $e_k(|A(RT_{S_1})|,|A(RT_{S_2})|,\dots, |A(RT_{S_n})|)$.
 It follows that,
$$
\chi(P,t) = \sum_{k=0}^{n}(-1)^ke_k(|A(RT_{S_1})|,|A(RT_{S_2})|,\dots, |A(RT_{S_n})|)t^{m-k}.
$$
Pulling out a factor of $t^{m-n}$ permits us to rewrite the sum as a product
$$
\chi(P,t) = t^{m-n}\prod_{i=1}^n (t- |A(RT_{S_i})|)
$$
completing the proof.
\end{proof}

\section{Complete Transversal Functions}\label{compTranFuncSec}
By definition, a transversal function must be surjective.  However, if we impose more structure on the choice of subsets used to build the rooted trees, we can remove this assumption.  In order to show this, we begin with a definition.
 
\begin{defi}
Let $P$ be a poset  and let $A$ be a set of atoms.   The {\it complete tree} (with respect to $A$) is the rooted tree $RT_{\hat{U}(A)}$ where $\hat{U}(A)$ is the upper order ideal generated by the set $A$ together with $\hat{0}$.
\end{defi}

Along with this new definition, we have a new type of function.

\begin{defi}\label{compTranFuncDefi}
Let $P$ be a poset   and let  $(A_1, A_2, \dots, A_n)$ be  an ordered partition of $A(P)$.  We say $f: \prod_{i=1}^n RT_{\hat{U}(A_i)} \rightarrow P$ is a  {\it complete transversal function} if  it is order preserving and has the property that if  in $\bfm{t}$ we have  $t_i=\hat{0}$ or $t_i=x$ for all $i$, then $f(\bfm{t})= x$.
\end{defi}

Note that it may appear that complete transversal functions are not transversal functions because we dropped the condition that they are surjective.   However, we will see in the next lemma that, among other nice properties, the surjectivity of the function is a consequence of the definition. We also note that if we have a lattice, then $f(\bfm{t})=\vee \bfm{t}$ is a complete transversal function where $\vee\bfm{t}=t_1\vee\dots\dots\vee t_n$ .

It will be useful  to have notation for a new transversal obtained by inserting an element into a preexisting transversal.  To do this, we will use $\bfm{t}(e^{i})$ to denote the transversal which is obtained by replacing the $i^{th}$ coordinate of $\bfm{t} =(t_1,t_2,\dots,t_n)$ with an element $e$. So we have,
$$
\bfm{t}(e^{i}) =(t_1, t_2, \dots, t_{i-1}, e, t_{i+1}, \dots, t_n).
$$

\begin{lem}\label{completeTreeLem}
Let $P$ be a poset   and let  $(A_1, A_2, \dots, A_n)$ be  an ordered partition of $A(P)$.   Let $f$ be a complete transversal function.  Then we can conclude the following.
\begin{itemize}
\item[(a)] The function $f$ is surjective and  $f$ is a transversal function.
\item[(b)] For all $j$, $t_j\leq f(t_1,t_2,\dots, t_n)$.
\item[(c)]  If $x\leq y$ and $\bfm{s}\in\mathcal{T}_x$, there exists $\bfm{t}\in\mathcal{T}_y$ with $\bfm{s}\leq\bfm{t}$.
\item[(d)] For $x\in P$, let  $N_i$  be the number of atoms below $x$ in $A_i$, then 
\begin{equation}\label{sumOfMob}
\sum_{\bfm{s}\in L(\mathcal{T}_x)} \mu(\bfm{s}) =  \prod_{i=1}^n(1-N_i).
\end{equation}
\item[(e)] The summation condition~\ree{sumCondEq} holds for all $\mathcal{T}_x$  if and only if for all nonzero $x\in P$, there is an index $i$ such that $|A_i\cap A_x|=1$.
\end{itemize}
\end{lem}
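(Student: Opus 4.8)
The plan is to establish the parts roughly in the order (a), (b), (c), then (d), and finally derive (e) as a corollary of (d). For part (b): given a coordinate $j$, apply the defining property of a complete transversal function to the transversal $\bfm{t}(\hat{0}^{1})(\hat{0}^{2})\cdots$ — more precisely, observe that the transversal whose $j$-th coordinate is $t_j$ and whose other coordinates are $\hat{0}$ is sent to $t_j$ by $f$; since $\bfm{t}$ dominates this transversal in the product of rooted trees and $f$ is order preserving, $t_j = f(t_j\text{-only transversal}) \leq f(\bfm{t})$. For part (a), surjectivity is immediate from the defining property with all coordinates equal to $x$ (legitimate because $x \in \hat{U}(A_i)$ for every $i$, as $\hat{U}(A_i)$ contains $\hat{0}$ and the upper ideal of $A_i$, and actually one needs $x$ to lie above some atom in each block — here one must be slightly careful, since $\hat{U}(A_i)$ only contains elements above atoms of $A_i$ together with $\hat{0}$; so for $x$ not above any atom of $A_i$ we instead use $t_i=\hat{0}$, and the "$t_i = \hat{0}$ or $t_i = x$ for all $i$" phrasing in the definition is exactly what makes this work). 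The third transversal-function axiom, that $f(\bfm{t}) = \hat{0}$ forces $\bfm{t} = \hat{0}$, follows from (b): if $f(\bfm{t}) = \hat{0}$ then $t_j \leq \hat{0}$ for all $j$.

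For part (c), this is the analogue of the standard-equivalence-relation lifting property and is the step I expect to require the most care. Given $x \leq y$ and an atomic-or-arbitrary transversal $\bfm{s} = (s_1,\dots,s_n) \in \mathcal{T}_x$, I want to build $\bfm{t} \in \mathcal{T}_y$ with $\bfm{s} \leq \bfm{t}$. The natural candidate is to "push each coordinate up to $y$": for each $i$, since $s_i \in RT_{\hat{U}(A_i)}$ is a chain below $x \leq y$, and $\hat{U}(A_i)$ is an upper order ideal (so it is closed upward), one can extend the chain $s_i$ to a saturated chain reaching $y$, provided $s_i \leq y$ — which holds because $s_i \leq x \leq y$ by (b). Setting $t_i$ to be such an extension gives $s_i \leq t_i$ in each rooted tree; then I need $f(\bfm{t}) = y$. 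This does not follow directly from the defining property since the $t_i$ need not all equal $y$ or $\hat{0}$. The fix: once $\bfm{t} \geq \bfm{s}$ and $f$ is order preserving, $f(\bfm{t}) \geq f(\bfm{s}) = x$; to pin $f(\bfm{t})$ down to exactly $y$, choose the extensions more cleverly or compare with the all-$y$ transversal $\bfm{y}$. Since each $t_i \leq y$ (as $t_i$ is a chain in $\hat{U}(A_i)$ topping out at $y$, using (b) applied in reverse — actually $t_i$'s top element is $y$ by construction, so $t_i \leq \bfm{y}_i$ is not quite right as tree elements; rather the top of $t_i$ equals $y$), we get $\bfm{t} \leq \bfm{y}$ is false in general, so instead I argue: $f(\bfm{t}) \leq y$ because each coordinate of $\bfm{t}$ has top element $\leq y$ and one can interpolate, and combined with $f(\bfm{t}) \geq x$ plus a monotonicity squeeze using that $\bfm{t}$ also dominates a transversal mapping to $y$ (namely take one coordinate to be the full chain to $y$ and the rest $\hat{0}$), we conclude $f(\bfm{t}) = y$. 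I will need to organize this squeeze argument carefully.

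For part (d), the key observation is that $L(\mathcal{T}_x)$, the lower order ideal in $\prod_i RT_{\hat{U}(A_i)}$ generated by $\mathcal{T}_x$, is exactly the product of the lower ideals generated by the chains to atoms of $A_i$ that lie below $x$; equivalently it is the sub-poset $\prod_i I_i$ where $I_i$ consists of $\hat{0}$ together with the atoms of $RT_{\hat{U}(A_i)}$ that correspond to atoms of $P$ below $x$. This uses (b) and (c): any transversal for an element $\leq x$ can be dominated by one for $x$, and conversely. Each $I_i$ is a "claw" — a minimum $\hat{0}$ with $N_i$ atoms above it and nothing else — whose sum of Möbius values over the whole poset is $1 - N_i$ (one for $\hat{0}$, and $-1$ for each of the $N_i$ atoms). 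Since Möbius function and lower-ideal sums are multiplicative over products of posets, $\sum_{\bfm{s} \in L(\mathcal{T}_x)} \mu(\bfm{s}) = \prod_{i=1}^n (1 - N_i)$, which is \eqref{sumOfMob}. Finally, part (e): the summation condition \eqref{sumCondEq} says this sum is $0$ for every nonzero $x$; by (d) the product $\prod_i (1 - N_i)$ vanishes if and only if some factor $1 - N_i$ is zero, i.e.\ some $N_i = 1$, i.e.\ $|A_i \cap A_x| = 1$. (For nonzero $x$ at least one $N_i \geq 1$, so the product being zero is genuinely equivalent to some $N_i = 1$ rather than all being handled vacuously.) The main obstacle is the bookkeeping in (c) — making the "extend every coordinate to a chain reaching $y$ and check $f$ lands exactly on $y$" argument airtight — with (d)'s identification of $L(\mathcal{T}_x)$ as a product of claws being the other point needing a clean justification.
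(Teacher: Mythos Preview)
Your overall plan is sound, and parts (a), (b), and (e) match the paper. The genuine gap is in (c), with a smaller misstatement in (d).

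In (c) you already have the right construction: for each $i\in\supp\bfm{s}$ extend the chain $s_i$ inside $\hat{U}(A_i)$ to a saturated chain $t_i$ with top element $y$, and set $t_i=\hat{0}$ for $i\notin\supp\bfm{s}$. What you miss is that this $\bfm{t}$ already satisfies the hypothesis of Definition~\ref{compTranFuncDefi}. The paper's standing convention is to identify an element of a rooted tree with the top element of its chain, so the clause ``$t_i=\hat{0}$ or $t_i=x$ for all $i$'' in that definition means exactly that every coordinate has top element $\hat{0}$ or $x$. Your $\bfm{t}$ has this property (with $x=y$) by construction, whence $f(\bfm{t})=y$ immediately --- this is precisely the paper's argument. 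No squeeze is needed, and as you wrote it the squeeze only yields $f(\bfm{t})\ge y$, never $f(\bfm{t})\le y$.

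In (d) your assertion that $L(\mathcal{T}_x)=\prod_i I_i$ with each $I_i$ a claw is false: $L(\mathcal{T}_x)$ contains non-atomic transversals such as $(x,\hat{0},\dots,\hat{0})$. What the paper proves (equation~(\ref{L(T_x)})) is $L(\mathcal{T}_x)=\{\bfm{t}:t_i\le x\text{ for all }i\}$, which is the product $\prod_i J_i$ where $J_i$ is the full subtree of $RT_{\hat{U}(A_i)}$ consisting of chains with top $\le x$. Your multiplicativity argument still goes through once you replace $I_i$ by $J_i$, since $\sum_{c\in J_i}\mu(c)=1-N_i$ anyway (only the root and the $N_i$ atoms of the tree contribute). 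The paper instead groups atomic transversals by support size and sums via elementary symmetric functions, but your product-of-factors approach, once corrected, is an equally valid route to the same formula.
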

\begin{proof}
First we show (a).  Let $\bfm{\hat{0}}$ be the transversal having all components equal to $\hat{0}$.  Since we are using complete trees and a partition of the atom set, for every $x\in P$  there  exists some $i$ such that $\bfm{\hat{0}}(x^i)$  is a transversal.  It follows from the definition of a complete transversal function that $f(\bfm{\hat{0}}(x^i))=x$ and so $f$ is surjective.  

To show that the third condition for a transversal function holds, suppose that $f(\bfm{t})=\hat{0}$.    By definition of a complete transversal function, $f(\bfm{\hat{0}}(t_i^i)) = t_i$.  Since $f$ is order preserving and $\bfm{\hat{0}}(t_i^i)\leq \bfm{t}$ we get that $t_i = f(\bfm{\hat{0}}(t_i^i))\leq f(\bfm{t})= \hat{0}$. Therefore, if $f(\bfm{t})=\hat{0}$, then $\bfm{t}= \bfm{\hat{0}}$.  This completes the proof that $f$ is a transversal function.

For (b), we noted in the previous paragraph that
$$
f(\bfm{\hat{0}}(t^j_j)) = t_j.
$$
Using the fact that $f$ is order preserving, we get that 
$$
t_j=f(\bfm{\hat{0}}(t^j_j))\leq f(t_1,t_2,\dots, t_n).
$$

Next we prove (c).  This is trivial if $x=\hat{0}$ so assume $x$ is nonzero.   Let $\bfm{s}\in\mathcal{T}_x$.  Then by by part (b), $s_i\leq x$ for all $i$.  Let $\bfm{t}$ be given by $t_i=y$ for all $i$ with $i\in\supp \bfm{s}$ and $t_i=\hat{0}$ for all other $i$.  Such a $\bfm{t}$ is a valid transversal since $s_i\le x\le y$ and we are using complete trees. Note also that since $x\neq\hat{0}$ it must be that $\bfm{t}$ has at least one nonzero coordinate.    It follows that $\bfm{t}\in\mathcal{T}_y$ and $\bfm{s}\leq \bfm{t}$.

Next, let us show  (d).  We start by showing that 
\begin{equation}
\label{L(T_x)}
L(\mathcal{T}_x)=\{\bfm{t} \mbox{ a transversal } : t_i\leq x \mbox{ for all } i\}.
\end{equation}

To see that $L(\mathcal{T}_x)$ is contained in the other set, let $\bfm{t}\in L(\mathcal{T}_x)$.  Then for each $i$ we have $\bfm{\hat{0}}(t_i^i) \in L(\mathcal{T}_x)$.  By definition of a complete transversal function, $f(\bfm{\hat{0}}(t_i^i))=t_i$.  Since $f$ is order preserving,  $t_i=f(\bfm{\hat{0}}(t_i^i))\leq f(\bfm{t}) = x$. 

For the reverse inclusion, suppose that $\bfm{t}$ is a transversal with $t_i\leq x$  for all $i$.  Let $\bfm{s}$ be the transversal obtained from $\bfm{t}$ by replacing all the nonzero $t_i$ with $x$.  We know that  $\bfm{s}$ is a valid transversal because we are using complete trees.  Since $f$ is a complete transversal function, $f(\bfm{s})=x$ and so $\bfm{s}\in L(\mathcal{T}_x)$.  By construction, $\bfm{t}\leq \bfm{s}$ and therefore $\bfm{t}\in L(\mathcal{T}_x)$.

 Let $I$ be the set of indices, $i$, such that there is an atom below $x$ in $A_i$. By relabeling, if necessary, we may assume that  $I=\{1,2, \dots, j\}$.  Since $N_i=0$ implies that $1-N_i=1$,
$$
\prod_{i=1}^j(1-N_i)=\prod_{i=1}^n(1-N_i).
$$

From equation~(\ref{L(T_x)}) we can conclude that the number of atomic transversals in $ L(\mathcal{T}_x)$ with support size $i$ is $e_{i}(N_1, N_2, \dots, N_j)$ where $e_i$ is the $i^{th}$ elementary symmetric function.   Now for each atomic transversal $\bfm{s} \in L(\mathcal{T}_x)$ we have that $\mu(\bfm{s}) = (-1)^{|\supp(\bfm{s})|}$
and all other transversals have M\"obius value zero.  Therefore, 
$$
\sum_{\bfm{s}\in L(\mathcal{T}_x)} \mu(\bfm{s}) = \sum_{i=0}^j (-1)^i e_i(N_1, N_2, \dots, N_j)=  \prod_{i=1}^j(1-N_i)
$$
which completes part (d).

Finally, (e) follows immediately from (d) and the definition of the summation condition.
\end{proof}

Given this lemma, we can use  Theorem~\ref{mainThm} to immediately obtain the following.

\begin{thm}\label{mainThmCor2}
Let $P$ be a poset   with  $\rho: P \rightarrow \mathbb{N}$ and let $m\in \mathbb{N}$ such that $\rho(P)\leq m$. Moreover,  let $(A_1, A_2, \dots, A_n)$ be an  ordered partition of $A(P)$ and let $f$ be a complete transversal function.  Suppose the following hold.
\begin{enumerate}
\item[(1)]  If $\bfm{t}\in \mathcal{T}_x^a$, then $|\supp \bfm{t}|=\rho(x)$.
\item[(2)]  For all $x\in P$, there is an index $i$ such that $|A_i\cap A_x|=1$.
\end{enumerate}
We can conclude the following.
\begin{itemize}
\item[(a)] We have an isomorphism
$$
P \cong \left( \prod_{i=1}^n RT_{\hat{U}(A_i)}\right)/\ker f.
$$
\item[(b)]For each $x\in P$,
$$
\mu(x) = (-1)^{\rho(x)}|\mathcal{T}_x^a|.
$$
\item[(c)] The characteristic polynomial of $P$  with respect to $\rho$ and $m$  is given by
$$
\chi(P,t)= t^{m-n}\prod_{i=1}^n (t- |A_i|).
$$
\end{itemize}
\end{thm}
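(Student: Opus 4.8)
The plan is to obtain this statement as an essentially immediate consequence of Theorem~\ref{mainThm}, applied with the subsets $S_i := \hat U(A_i)$, once the three hypotheses of that theorem have been checked in the present setting. First I would note that each $\hat U(A_i)$ contains $\hat 0$ by definition, so $(\hat U(A_1),\dots,\hat U(A_n))$ is an admissible ordered collection of subsets for Theorem~\ref{mainThm}. Next, Lemma~\ref{completeTreeLem}(a) tells us that the complete transversal function $f$ is surjective and is in fact a transversal function in the sense of Definition~\ref{transFuncDefi}; hence $f$ can serve as the transversal function required by Theorem~\ref{mainThm}, and the equivalence classes $\mathcal T_x$ and the atomic transversals $\mathcal T_x^a$ are the same objects in both settings.

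Now I would verify the three numbered hypotheses of Theorem~\ref{mainThm}. Hypothesis~(1) there --- that if $x\le y$ and $\bfm s\in\mathcal T_x$ then some $\bfm t\in\mathcal T_y$ satisfies $\bfm s\le\bfm t$ --- is exactly Lemma~\ref{completeTreeLem}(c). Hypothesis~(2) there --- that $|\supp\bfm t|=\rho(x)$ for every $\bfm t\in\mathcal T_x^a$ --- is hypothesis~(1) of the present theorem verbatim. Hypothesis~(3) there --- the summation condition~\ref{sumCondEq} for every $\mathcal T_x$ --- follows from Lemma~\ref{completeTreeLem}(e), whose stated criterion ``for all nonzero $x\in P$ there is an index $i$ with $|A_i\cap A_x|=1$'' is precisely hypothesis~(2) of the present theorem. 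With all the hypotheses of Theorem~\ref{mainThm} in force, parts~(a) and~(b) of that theorem give parts~(a) and~(b) here directly.

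It remains to deduce part~(c). Theorem~\ref{mainThm}(c) yields $\chi(P,t)=t^{m-n}\prod_{i=1}^n\bigl(t-|A(RT_{\hat U(A_i)})|\bigr)$, so the only thing left is to show $|A(RT_{\hat U(A_i)})|=|A_i|$. The atoms of $RT_{\hat U(A_i)}$ are the saturated chains $\hat 0\cover s$ with $s\in\hat U(A_i)$, that is, the atoms of $P$ lying in $\hat U(A_i)$; since $\hat U(A_i)$ is the upper order ideal generated by $A_i$ together with $\hat 0$ and an atom of $P$ can lie above only $\hat 0$ and itself, these atoms are exactly the elements of $A_i$. Substituting $|A(RT_{\hat U(A_i)})|=|A_i|$ into the product completes the proof. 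I do not expect a serious obstacle here: the bulk of the argument is bookkeeping that matches each hypothesis to the corresponding part of Lemma~\ref{completeTreeLem}, and the one genuinely new observation --- the identification of the atoms of the complete tree $RT_{\hat U(A_i)}$ with $A_i$ --- is the only step requiring a moment's thought.
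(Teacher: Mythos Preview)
Your proposal is correct and follows essentially the same approach as the paper, which simply states that the theorem follows immediately from Theorem~\ref{mainThm} via Lemma~\ref{completeTreeLem}. You have made explicit the identification $|A(RT_{\hat U(A_i)})|=|A_i|$ needed for part~(c), which the paper leaves implicit.
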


The reader may  be wondering why we did not just assume from the start that we were using complete transversal functions.  By doing so, we reduce the number of things we need to check and we still get the same conclusions  as in Theorem~\ref{mainThm}.  However, there are situations where the first theorem applies but the second does not.

\begin{figure}
\begin{center}
\begin{tikzpicture}
\node (0) at (0,0) {$1^0/2^0/3^0 $};
\node (a) at (-5,2) {$12^0/3^0 $};
\node (b) at (-3,2) {$13^0/2^0 $};
 \node (c) at (-1,2) {$1^0 /23^0 $};
 \node (d) at (1,2) {$12^1/3^0 $};
 \node (e) at (3,2) {$13^1/3^0 $};
 \node (f) at (5,2) {$1^0/23^1 $};
\node (g) at (-3,4) {$123^0 $};
\node (h) at (0,4) {$123^1$};
\node (i) at (3,4) {$123^2$};

\draw(0)--(a);
\draw(0)--(b);
\draw(0)--(c);
\draw(0)--(d);
\draw(0)--(e);
\draw(0)--(f);
\draw(h)--(a)--(g);
\draw(h)--(b)--(g);
\draw(h)--(c)--(g);
\draw(h)--(d)--(i);
\draw(h)--(e)--(i);
\draw(h)--(f)--(i);

\end{tikzpicture}
\caption{The Weighted Partition Poset $\Pi_3^w$} \label{wpi3Fig}
\end{center}
\end{figure}
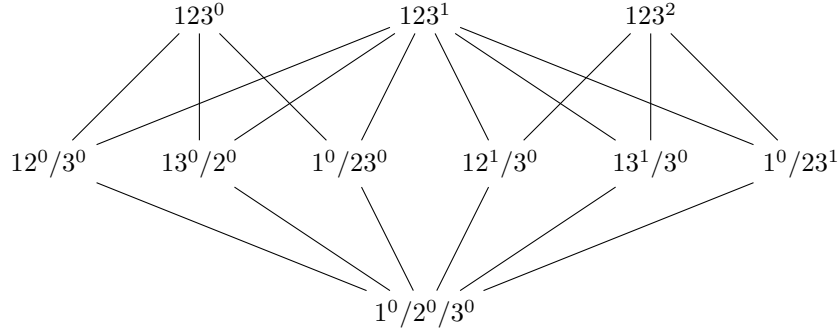

Let us give an example were the summation condition~\ree{sumCondEq} for $\mathcal{T}_x$  needed in Theorem~\ref{mainThm} holds, but the second condition of Theorem~\ref{mainThmCor2} does not.  We will consider the weighted partition poset, $\Pi_n^w$ introduced in~\cite{dk:cfopcbbho}. The elements of $\Pi_n^w$ are set partitions of $\{1,2,\dots, n\}$  where  each block $B_i$ has one of the following weights $\{0,1,\dots, |B_i|-1\}$.   The weighted partitions will be denoted by $B_1^{w_1}/B_2^{w_2}/ \dots/B_n^{w_n}$ where $w_i$ is the weight of block $B_i$. The ordering is given by 
$$
A_1^{v_1}/A_2^{v_2}/ \dots/A_k^{v_k}\leq B_1^{w_1}/B_2^{w_2}/ \dots/B_n^{w_n}
$$
if and only if 
\begin{enumerate}
\item We have
$$
A_1/A_2 /\dots/A_k \leq B_1/B_2/ \dots/B_n
$$
in the (unweighted) partition lattice $\Pi_n$.
\item If $B_l = A_{i_1}\cup A_{i_2} \cup \dots \cup A_{i_m}$, then 
$$
v_l -(w_{i_1}+w_{i_2}+\dots+ w_{i_m}) \in \{1,2,\dots, m-1\}.
$$
\end{enumerate}

The weighted partition poset $\Pi_3^w$ is shown in Figure~\ref{wpi3Fig}.  It is easy to check that the classic characteristic polynomial of this poset factors as
$$
\chi(\Pi_3^w,t) = (t-3)^2.
$$
Consider the sets 
$$
A_1 = \{12^0/3^0, 13^0/2^0, 12^1/3^0,\hat{0}\}
$$
 and 
$$
A_2= \{ 1^0 /23^0,13^1/3^0, 1^0/23^1,\hat{0}\}.
$$
Additionally, consider the transversal function $f:\prod_{i=1}^2 RT_{A_i} \rightarrow \Pi_3^w$  which sends any pair which contains $\hat{0}$ to the other element in the pair and sends any pair with two non-zero elements  to $123^i$ where $i$ is the sum of their exponents.  It is easy to check that $f$ is a transversal function and that the summation condition~\ree{sumCondEq} is satisfied.  However, the element $123^1$ is above every atom so it is impossible that it is above only one atom of either $A_1$ or $A_2$.   One can also check that all the conditions of Theorem~\ref{mainThm} are satisfied and so we have verified that the classic characteristic polynomial does factor using our method.

We should also point out that, as was shown in~\cite{dw:cpwp}, the classic  characteristic polynomial of the weighted partition poset $\Pi_n^w$   factors as $\chi(\Pi_n^w,t) = (t-n)^{n-1}$. This was shown using different methods than presented here.  As of now, we do not have a transversal function which gives us the factorization.   

\section{Tamari Lattices and LL Lattices}\label{tamSec}

Despite  the reason explained earlier, Theorem~\ref{mainThmCor2}  can be quite useful.   First, we will show how to use it to explain the factorization of any interval of the Tamari lattice which implies a factorization result for both $m$-Tamari lattices, originally defined in~\cite{bp:htdhgtp}, and the standard Tamari lattice.   We note that these results are new.   We will also use the theorem to give a nice formula for the  M\"obius function of intervals in the Tamari lattice.  Finally, we will show that the theorem implies a result of Blass and Sagan~\cite{bs:mfl} concerning   LL lattices.

To show that  the characteristic polynomial of the  intervals of the Tamari lattices factor,  we introduce a different way to denote the elements.  For each element of $\sigma\in T_n$ we will give a corresponding  \emph{left-bracket vector} $(v_1,v_2, \dots, v_n)$. The value of  $v_i$  is obtained by locating $x_i$ in $\sigma$ then, moving left,  counting the number of $x$'s (including $x_i$) and left parentheses that you  pass  until the two numbers are the same.  At this point, stop and set $v_i=j$ where $x_j$ is the last $x$ that was passed before the two numbers became equal.

Let us do an example of calculating a  left-bracket vector. Suppose that  we let $\sigma=((x_1x_2)(x_3x_4))$, then to find $v_1$ look for $x_1$ and move left.  Immediately to the left of $x_1$ we find a left parentheses and so $v_1=1$.  Next, we have that $v_2=1$ since $x_1$ and $x_2$ are adjacent and are preceded by two left parentheses.  Finally, for $v_3$ notice that just preceding $x_3$ we have a left parentheses  and so $v_3=3$.  Therefore, the left-bracket vector associated to $((x_1x_2)(x_3x_4))$ is $(1,1,3)$.

If we use left-bracket vectors, the partial order for the Tamari lattice is defined by $ (v_1,v_2, \dots, v_n) \leq (w_1,w_2,\dots, w_n)$ provided $v_i\leq w_i$ for all $1\leq i\leq n$.  In addition to the simple way the partial order is defined using left-bracket vectors, the join operation also has a nice description.
\begin{prop}[\cite{ht:pa}]\label{tamJoinProp}
Using left-bracket vector notation, the join operation in the Tamari lattice is as follows,
$$
(v_1,v_2, \dots, v_n) \vee (w_1,w_2,\dots, w_n) = (\max(v_1,w_1), \max(v_2,w_2),\dots,\max(v_n,w_n)).
$$
\end{prop}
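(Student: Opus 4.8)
The plan is to push everything through the left-bracket vector encoding, using one structural fact: a vector $(v_1,\dots,v_n)$ with $1\le v_i\le i$ for every $i$ is the left-bracket vector of an element of $T_n$ if and only if it is \emph{nested}, meaning $v_j\ge v_i$ whenever $v_i\le j\le i$; moreover distinct elements of $T_n$ have distinct left-bracket vectors. This is the characterization of bracketing vectors established in \cite{ht:pa} (it can also be reproved by a short induction on $n$ analyzing the position of $x_{n+1}$), and I would simply quote it. Granting it, the proposition reduces to a statement about the set $V_n$ of nested vectors equipped with the coordinatewise order, which, as recalled just before the proposition, is the order on $T_n$.

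The first step is to check that $V_n$ is closed under coordinatewise maximum. Let $\sigma,\tau\in T_n$ have left-bracket vectors $(v_1,\dots,v_n)$ and $(w_1,\dots,w_n)$, and set $u_i=\max(v_i,w_i)$. Then $u_i\le i$, and if $u_i\le j\le i$ we may assume, interchanging the two vectors if necessary, that $u_i=v_i$, so that $v_i\le j\le i$; nestedness of $(v_1,\dots,v_n)$ gives $v_j\ge v_i=u_i$, hence $u_j\ge v_j\ge u_i$. Thus $(u_1,\dots,u_n)$ is nested, so there is a unique $\pi\in T_n$ whose left-bracket vector is $(u_1,\dots,u_n)$.

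It then remains to identify $\pi$ with the join. Since $u_i\ge v_i$ and $u_i\ge w_i$ for all $i$, the coordinatewise order shows $\pi\ge\sigma$ and $\pi\ge\tau$, so $\pi$ is a common upper bound. If $\pi'\in T_n$ is any common upper bound of $\sigma$ and $\tau$, then its left-bracket vector dominates $(v_1,\dots,v_n)$ and $(w_1,\dots,w_n)$ coordinatewise, hence dominates $(u_1,\dots,u_n)$ coordinatewise, so $\pi'\ge\pi$. Therefore $\pi=\sigma\vee\tau$, and its left-bracket vector is $(\max(v_1,w_1),\dots,\max(v_n,w_n))$, which is exactly the asserted formula.

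The only real obstacle is the nested-vector characterization invoked at the outset; once it is in hand the remainder is a two-line order argument. If one prefers a fully self-contained treatment, the difficulty migrates to exhibiting explicitly the parenthesization whose left-bracket vector is a prescribed nested vector, but that construction is precisely the content of \cite{ht:pa} and may be cited. I would also add the cautionary remark that $V_n$ is \emph{not} closed under coordinatewise minimum (for instance $(1,1,3)$ and $(1,2,2)$ lie in $V_3$ but their coordinatewise minimum $(1,1,2)$ is not nested), so no analogous formula holds for the meet and the asymmetry in the statement is genuine.
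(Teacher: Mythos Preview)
The paper does not prove this proposition; it is quoted directly from Huang--Tamari \cite{ht:pa} and used as a black box. Your proposal therefore goes beyond what the paper does by supplying an argument, and the argument you give is correct: the nestedness condition you state is equivalent to the interval characterization the paper records just before Proposition~\ref{tamMobProp} (if $v_i\le j\le i$ then $j\in S_i\cap S_j$, forcing $S_j\subseteq S_i$ and hence $v_j\ge v_i$; the converse is similar), and once that is granted your closure-under-max computation and the two-line least-upper-bound verification are routine. Your cautionary example on meets is also correct. So there is nothing to compare against---you have filled in a proof the paper chose to cite rather than include.
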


In the next proof and the sequel we will use the notation $x\cover y$ to indicate that $y$ covers $x$.
 \begin{prop}\label{intTamariProp}
Let $T_n$ be the Tamari lattice and let I be any interval in $T_n$.  Let $\rho$ be generalized rank  as defined by equation~(\ref{genrho}) and let $m$ be the length of the longest chain in $I$. 
 If there are $k$ atoms in the interval $I$ and $\chi(I,t)$ is the characteristic polynomial with respect to $\rho$ and $m$, then
$$
\chi(I,t) = t^{m-k}(t-1)^k.
$$
\end{prop}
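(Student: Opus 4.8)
The plan is to realize $I$ as a lattice and apply Theorem~\ref{mainThmCor2}, taking the ordered partition of $A(I)$ into singletons and the join as the complete transversal function. First I would note that $I=[u,w]$, being an interval of the lattice $T_n$, is itself a finite lattice with least element $\hat{0}=u$, and by Proposition~\ref{tamJoinProp} its join is the coordinatewise maximum of left-bracket vectors. Let $a_1,\dots,a_k$ be the atoms of $I$ (the elements covering $u$) and set $A_\ell=\{a_\ell\}$. Then $(A_1,\dots,A_k)$ is an ordered partition of $A(I)$, and by equation~\ree{genrho} the corresponding generalized rank is $\rho(x)=|\{\ell:a_\ell\le x\}|=|A_x|$, the number of atoms of $I$ below $x$ (this does not depend on the chosen ordering, and is the $\rho$ in the statement). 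Define $f(\bfm{t})=t_1\vee\cdots\vee t_k$; by the remark following Definition~\ref{compTranFuncDefi}, $f$ is a complete transversal function on $\prod_{\ell=1}^k RT_{\hat{U}(A_\ell)}$. Since any element of $\hat{U}(\{a_\ell\})$ covering $u$ is an atom of $I$ lying above $a_\ell$, and hence equals $a_\ell$, each tree $RT_{\hat{U}(A_\ell)}$ has exactly one atom, so $|A(RT_{\hat{U}(A_\ell)})|=|A_\ell|=1$; thus Theorem~\ref{mainThmCor2}(c) will give $\chi(I,t)=t^{m-k}\prod_{\ell=1}^k(t-1)=t^{m-k}(t-1)^k$ as soon as its hypotheses are checked.

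Hypothesis~(2) of Theorem~\ref{mainThmCor2} (equivalently, via Lemma~\ref{completeTreeLem}(e), the summation condition for every $\mathcal{T}_x$) is immediate: the blocks $A_i$ are singletons, so $|A_i\cap A_x|\le 1$ always, and it equals $1$ for at least one $i$ for every nonzero $x$, since such an $x$ lies above some atom of the finite poset $I$. For hypothesis~(1) I would argue as follows. An atomic transversal $\bfm{t}\in\mathcal{T}_x^a$ has each $t_\ell\in\{\hat{0},a_\ell\}$, so setting $T=\supp\bfm{t}$ we have $x=f(\bfm{t})=\bigvee_{\ell\in T}a_\ell$; as $a_\ell\le x$ for $\ell\in T$, this gives $T\subseteq A_x$, and hypothesis~(1) is exactly the opposite inclusion $A_x\subseteq T$. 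So everything reduces to the claim that for any atom $a_j$ of $I$ and any index set $T$ with $j\notin T$ one has $a_j\not\le\bigvee_{\ell\in T}a_\ell$: granting this, $j\in A_x$ would force $a_j\le x=\bigvee_{\ell\in T}a_\ell$, hence $j\in T$, so $A_x=T$ and $|\supp\bfm{t}|=\rho(x)$. Incidentally, the same claim also yields the remaining hypothesis $\rho(I)\le m$: if $|A_x|=r$ is maximal and $a_{\ell_1},\dots,a_{\ell_r}$ are the atoms below $x$, then $\hat{0}\cover a_{\ell_1}<a_{\ell_1}\vee a_{\ell_2}<\cdots<a_{\ell_1}\vee\cdots\vee a_{\ell_r}\le x$ is a chain that is strictly increasing by the claim, so the longest chain $m$ in $I$ has length at least $r=\rho(I)$.

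It remains to prove the displayed claim, and this is the one place where the structure of the Tamari order is used. Here I would invoke the left-bracket vector description from~\cite{ht:pa}: every covering relation of $T_n$ strictly increases exactly one coordinate of the left-bracket vector, leaving the others fixed. Consequently each atom $a_\ell$ of $I$ differs from $\hat{0}=u$ in precisely one coordinate $c(\ell)$, where $(a_\ell)_{c(\ell)}>u_{c(\ell)}$; moreover $\ell\mapsto c(\ell)$ is injective, since two atoms that exceed $u$ in the same coordinate would agree with $u$, and hence with each other, in every other coordinate. Now fix $j\notin T$. For each $\ell\in T$ we have $\ell\neq j$, so $a_\ell$ agrees with $u$ in coordinate $c(j)$; since joins are coordinatewise maxima, $\bigl(\bigvee_{\ell\in T}a_\ell\bigr)_{c(j)}=\max_{\ell\in T}(a_\ell)_{c(j)}=u_{c(j)}<(a_j)_{c(j)}$, and therefore $a_j\not\le\bigvee_{\ell\in T}a_\ell$, proving the claim.

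With the claim established, all the hypotheses of Theorem~\ref{mainThmCor2} hold, and part~(c) gives $\chi(I,t)=t^{m-k}(t-1)^k$. I expect the main obstacle to be precisely the Tamari-specific input of the third paragraph --- the coordinatewise description of covers of $T_n$; everything else is a routine verification against Theorem~\ref{mainThmCor2}, essentially a uniform version of the $T_3$ computation carried out in the introduction.
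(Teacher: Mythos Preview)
Your proof is correct and follows essentially the same route as the paper: partition the atoms of $I$ into singletons, take $f=\vee$ as the complete transversal function, verify hypothesis~(2) of Theorem~\ref{mainThmCor2} trivially, and establish hypothesis~(1) together with $\rho(I)\le m$ from the left-bracket-vector fact that each cover changes exactly one coordinate and joins are coordinatewise maxima. The only cosmetic difference is that the paper phrases the key claim as ``the join of $j$ atoms is above exactly those $j$ atoms'' rather than your contrapositive formulation $a_j\not\le\bigvee_{\ell\in T}a_\ell$ for $j\notin T$, but the content is identical.
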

\begin{proof}

Partition the atoms of $I$ as $(A_1,A_2,\dots, A_k)$ where each $A_k$ has exactly one atom and use the complete transversal function $f(\bfm{t})=\vee \bfm{t}$.  With this partition  we trivially get condition (2) of Theorem~\ref{mainThmCor2}. Since we are using generalized rank, we must show that the join of any $j$ atoms is above exactly $j$ atoms in order to show condition (1). We will use the left-bracket vector representation of the elements of the Tamari lattice to verify this.

If $v$ and $w$ are left-bracket vectors and $v\cover w$, then it is easy to see that $v$ and $w$ agree in all but one position. Additionally, if we take $j$ atoms of the interval they all cover the same element in the Tamari lattice.  It follows that  each of the atoms differs from the $\hat{0}$ of the interval in one of $j$ distinct positions.   Using Proposition~\ref{tamJoinProp}, we can see that the join of $j$ atoms of the interval disagrees with the bottom element of the interval in exactly $j$ places. Let $x=a_1\vee a_2\vee\dots \vee a_j$ where the $a_i\in A(I)$.  Suppose that $b \in A(I)$ with $b\leq x$.  Then $b$ differs from the $\hat{0}$ of $I$ in exactly one place.  Moreover, since $b\leq x$, it must be one of the $j$ positions where $x$ disagrees with the $\hat{0}$ of $I$.  This implies that $b=a_i$ for some $i$. Therefore, the join of $j$ atoms is above exactly those $j$ atoms.

 Finally, we must show that $m\ge\rho(I)$ where $m$ is the length of the longest chain in $I$ since this was required in the definition of the characteristic polynomial.   Let $x_0$ be the $\hat{0}$ element of $I$ and for each $1\leq i\leq k$ define 
$$
x_i = \bigvee_{l=1}^i a_l
$$
where $a_i$ is the unique element of $A_i$.  Since the join of $j$ atoms is above exactly those $j$ atoms,  we know that all the $x_i$'s are distinct.  It follows that $I$ contains a chain of length $k$, namely the chain $x_0<x_1<\dots<x_k$.  Since $\rho$ is generalized rank  and since we partitioned the atom set into $k$ blocks, if $m$ is the length of the largest chain in $I$ then $\rho(I)=k \leq m$. Applying Theorem~\ref{mainThmCor2} now  yields the result.
\end{proof}

Let us  discuss some  consequences of this proposition.  First, since the length of the longest chain in $T_n$ is $n\choose 2$  and this poset has $n-1$ atoms,  we get that 
$$
\chi(T_n,t) = t^{n-1\choose 2}(t-1)^{n-1}
$$
which was originally shown in~\cite{bs:mfl}.

The other consequence concerns the factorization of the $m$-Tamari lattice.  Fix an $m$ and $n$,  noting that $m$ here is not being used as it was earlier in the paper. Following the definitions given in~\cite{bfp:nimt}  an \emph{m-ballot path of size $n$} is a path in the first quadrant of $\mathbb{R}^2$ from $(0,0)$ to $(mn,n)$  using unit steps north and east  which never goes below the line $x=my$.    Suppose that $P$ is an $m$-ballot path with an east step $E$ immediately followed by a north step $N$.  Another path $Q$ covers $P$ if $Q$ is obtained from $P$ by switching $E$ and $S$ where $S$ is the shortest factor of $P$ which starts at $N$ and is an $m$-ballot path. The set of $m$-ballot paths with  this covering relation defines the $m$-Tamari lattice. 

In~\cite[Proposition 4]{bfp:nimt},  it was shown that the $m$-Tamari lattices are isomorphic to intervals in the Tamari lattice.  Therefore,  we see that the characteristic polynomials of the  $m$-Tamari lattices also have a nice factorization.

Since we verified the assumptions of Theorem~\ref{mainThmCor2} in the proof of Proposition~\ref{intTamariProp}, we can also give a characterization of the M\"obius function of the intervals of the Tamari lattice.   We explain this characterization for the full Tamari lattice, but there is a similar formula for the intervals. Write a left-bracket vector $v=(v_1,v_2,\dots, v_n)$   in multiplicity notation $1^{k_1}2^{k_2}\cdots n^{k_n}$ where $k_i$ is the number of times that $i$ appears  as an entry in $v$.

In the proof of the following proposition we will make use of an equivalent definition of left-bracket vectors. As explained in~\cite{bs:mfl} a vector, $v=(v_1,v_2,\dots, v_n)$,  consisting of positive integers is a left-bracket vector  if and only if  the following hold.
\begin{enumerate}
\item For all $i$, $1\leq v_i\leq i.$
\item Letting $S_i=\{v_i, v_i+1\dots, i\}$, for any $S_i$ and $S_j$ either $S_i\cap S_j =\emptyset$ or one set is contained in the other.
\end{enumerate}  

With this equivalent definition, we can now state and prove a result about the M\"obius function of the Tamari lattice. Note the similarity of the M\"obius function of the Tamari lattice and the M\"obius function of the divisor lattice in the following proposition.

\begin{prop}\label{tamMobProp}
Let $T_n$ be the Tamari lattice.  If $v=1^{k_1}2^{k_2}\cdots n^{k_n}$ is written in multiplicity notation, then
$$
\mu(1^{k_1}2^{k_2}3^{k_3}\cdots n^{k_n})=
\begin{cases}
(-1)^{k_2+k_3+\dots+k_n}  & \mbox{if } 2^{k_2}3^{k_3}\cdots n^{k_n} \mbox{ is square free, }  \\0 & \mbox{otherwise,} 
\end{cases}
$$
 where square free means that $k_2,k_3,\dots, k_n\le 1$.
\end{prop}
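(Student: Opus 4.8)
The plan is to unwind the formula $\mu(v) = (-1)^{\rho(v)}\,|\mathcal{T}_v^a|$, which is available because the hypotheses of Theorem~\ref{mainThmCor2} were verified for every interval of the Tamari lattice --- in particular for $T_n$ itself --- in the proof of Proposition~\ref{intTamariProp}, with $\rho$ the generalized rank coming from the partition of $A(T_n)$ into singleton blocks and $f(\bfm{t})=\vee\bfm{t}$ the coordinatewise maximum. So the whole argument reduces to three bookkeeping steps: (i) identify the atoms of $T_n$ and the atoms below a given $v$, so as to pin down $\rho(v)$; (ii) count atomic transversals of $v$; and (iii) translate the answers into the multiplicity notation $1^{k_1}2^{k_2}\cdots n^{k_n}$.

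The technical heart is a small lemma about left-bracket vectors, proved from the combinatorial characterization recalled just before the proposition: if $v=(v_1,\dots,v_n)$ is a left-bracket vector and $v_i=j$ with $2\le j\le i$, then $v_j=j$. Indeed, both $S_i=\{j,j+1,\dots,i\}$ and $S_j=\{v_j,v_j+1,\dots,j\}$ contain $j$, so by the second condition of the characterization one of them is contained in the other; since $i\in S_i\setminus S_j$, we must have $S_j\subseteq S_i$, which forces $v_j\ge j$ and hence $v_j=j$. From this I would deduce that the only left-bracket vector differing from $\hat 0=(1,1,\dots,1)$ in a single coordinate $i$ is the vector $e^{(i)}$ having entry $i$ in position $i$ and $1$ elsewhere, and that each $e^{(i)}$ with $2\le i\le n$ is in fact an atom (nothing can lie strictly between it and $\hat 0$, again by the lemma). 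Since the Tamari order and its join are coordinatewise (Proposition~\ref{tamJoinProp}), $e^{(i)}\le v$ if and only if $v_i=i$, so the atoms below $v$ are exactly $\{e^{(i)}:v_i=i,\ i\ge 2\}$ and $\rho(v)=|\{i\ge 2:v_i=i\}|$.

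Next I would count atomic transversals. With the singleton blocks $A_i=\{e^{(i)}\}$ and $f(\bfm{t})=\vee\bfm{t}$, an atomic transversal for $v$ is a choice of $t_i\in\{e^{(i)},\hat 0\}$ whose coordinatewise maximum equals $v$, and the join of the $e^{(i)}$ with $i\in\supp\bfm{t}$ is the vector with entry $i$ in position $i$ for $i\in\supp\bfm{t}$ and $1$ elsewhere. Hence such a transversal exists precisely when $v_i\in\{1,i\}$ for every $i\ge 2$, in which case $\supp\bfm{t}=\{i:v_i=i\}$ is forced and $|\mathcal{T}_v^a|=1$; otherwise $|\mathcal{T}_v^a|=0$. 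Finally, using the lemma once more, the condition ``$v_i\in\{1,i\}$ for all $i$'' is equivalent to ``$k_j\le 1$ for all $j\ge 2$'', i.e.\ to $2^{k_2}\cdots n^{k_n}$ being square free: if some $v_i=j$ with $2\le j<i$ then also $v_j=j$, so $k_j\ge 2$; conversely, if $k_j\ge 2$ for some $j\ge 2$, then some position $i\ne j$ has $v_i=j$, and $v_i\le i$ with $j\ne i$ forces $j<i$, so $v_i\notin\{1,i\}$. In the square-free case $k_j$ is $1$ when $v_j=j$ and $0$ otherwise for each $j\ge 2$, so $\rho(v)=k_2+k_3+\cdots+k_n$, and substituting into $\mu(v)=(-1)^{\rho(v)}|\mathcal{T}_v^a|$ yields the stated formula.

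The main obstacle is the left-bracket-vector lemma together with using it in both directions --- to identify the atoms (hence $\rho$) and to recognize the square-free condition; everything else is routine manipulation of the coordinatewise order. A secondary point to watch is indexing: the partition of $A(T_n)$ has $n-1$ blocks (indexed by $i=2,\dots,n$), not $n$, so one must keep the number of coordinates of a left-bracket vector --- the ``$n$'' in the exponents of the multiplicity notation --- notationally distinct from the number $n-1$ of atoms used when invoking Theorem~\ref{mainThmCor2}.
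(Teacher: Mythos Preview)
Your proof is correct and follows essentially the same route as the paper: invoke Theorem~\ref{mainThmCor2}(b) via the verification in Proposition~\ref{intTamariProp}, identify the atoms of $T_n$ as the $e^{(i)}$, and characterize which $v$ admit a (necessarily unique) atomic transversal as exactly those with $v_i\in\{1,i\}$ for all $i$, equivalently the square-free ones. The one minor difference is your auxiliary lemma: you prove the direct implication ``$v_i=j\ge 2\Rightarrow v_j=j$'' and reuse it both to pin down the atoms and to show the equivalence with square-freeness, whereas the paper argues the contrapositive ``square-free $\Rightarrow v_j\in\{1,j\}$'' and asserts the shape of the atoms; your formulation is a bit cleaner, but the content is the same.
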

 \begin{proof}
Let $v=(v_1,v_2,\dots, v_n)$  be a left-bracket vector such that written in multiplicity notation $ 2^{k_2}3^{k_3}\cdots n^{k_n}$ is square free.  We claim that in this case $v_j=1$ or $v_j=j$ for all $j$.  Suppose that this was not the case and let $v_j$ be such that $v_j\neq 1,j$. This implies that $1< v_j<j$. Additionally, $1\leq v_{v_j}<v_j$ where the last inequality is strict since $v$ is square free.  So $S_{v_j}=\{v_{v_j},v_{v_j}+1,\dots, v_j\}$ and $S_j=\{v_j, v_j+1,\dots,j\}$ with $|S_{v_j}|,|S_j|\ge2$.  Thus $S_{v_j}\cap S_j=\{v_j\}\neq\emptyset$ but neither set contains the other, which gives the desired contradiction.

In the proof of  Proposition~\ref{intTamariProp}, we showed that  the conditions of Theorem~\ref{mainThmCor2} hold if we use the partition of the atoms of $T_n$ into singleton blocks.     
Written as left-bracket vectors, the atoms of $T_n$ are of the form $(1,1,\dots, i,\dots,1)$ where $i$ is in the $i^{th}$ position.  From conclusion (b) of Theorem~\ref{mainThmCor2}, we know that the M\"obius value of an element $v\in T_n$ is $(-1)^{\rho(v)}|\mathcal{T}_v^a|$.    We showed that the join of $j$ atoms is above exactly those $j$ atoms and so each element has at most one atomic transversal.  Using Proposition ~\ref{tamJoinProp} and the previous paragraph we see that if the numbers $2,3,\dots, n$ appear at most once in the left-bracket vector of $v$, then $v$ has an atomic transversal.  And, using the same proposition, if $v$ has an atomic transversal containing $j$ atoms then it is square free.  So in this case $\mu(v)=(-1)^j$.  The rest of the elements of $T_n$ have no atomic transversal and so $\mu(v)=0$ completing the proof of the proposition.
\end{proof}

Having shown what Theorem~\ref{mainThmCor2} can say about the Tamari lattices, we now turn our attention to seeing how it implies a theorem of~\cite{bs:mfl}.  To explain both the theorem as well as how to prove it, we begin by defining the notion of a partition of an atom set being induced by a multichain.

Let $P$ be a poset with $\hat{1}$ and let $C: \hat{0}=x_0\leq x_1\leq x_2 \leq \dots \leq  x_n=\hat{1}$ be a $\hat{0}$--$\hat{1}$ multichain of $P$.  We say $(A_1,A_2,\dots, A_n)$ is \emph{induced} by $C$ if for all $i$,
$$
A_i = \{a \in A(P) \mid \hspace{ 2 pt} a\leq x_i \mbox{ and } a \nleq x_{i-1}\}.
$$
Although a partition induced by a multichain exists for any poset  with a maximum element, for the next definition we will need to have a lattice.

\begin{defi}[\cite{hs:fcp}]
Let $L$ be a lattice and let $C: \hat{0}=x_0\leq x_1\leq x_2 \leq \dots \leq  x_n=\hat{1}$ be a $\hat{0}$--$\hat{1}$ multichain. For atomic $x\in L$, $x$ neither $\hat{0}$ nor an atom, let $i$ be the index such that $x\leq x_{i}$ but $x\not\leq x_{i-1}$. We say that $C$ satisfies the \emph{meet condition} if, for each such $x$, we have $x\wedge x_{i-1} \neq \hat{0}$. 
\end{defi}

It was shown in~\cite{hs:fcp}  that if a partition is induced by a multichain, then assumption (2) of Theorem~\ref{mainThmCor2} holds if and only if the multichain satisfies the meet condition.  We will call a multichain, $C: \hat{0}=x_0\leq x_1\leq x_2 \leq \dots \leq  x_n=\hat{1}$,  \emph{saturated} if for every inequality $x_{i-1}\leq x_i$ either $x_{i-1}=x_i$ or $x_{i-1}\cover x_i$. 

Recall that   an element $x$ in a lattice $L$ is called \emph{left-modular} if for all $y,z\in L$ with $y\leq z$  we have the following equality
$$
y\vee (x\wedge z)= (y\vee x)\wedge z.
$$
We call a multichain \emph{left-modular} if every element of the multichain is left-modular.  

In~\cite{hs:fcp} it was shown that saturated $\hat{0}$--$\hat{1}$ left-modular chains satisfy the meet condition.  If $C$ is a chain which satisfies the meet condition and $M$ is a multichain formed by using all the elements of $C$  at least once, then it is not hard to see that $M$ must also satisfy the meet condition.  It follows that $\hat{0}$--$\hat{1}$ saturated left-modular  multichains satisfy the meet condition.

The authors in~\cite{hs:fcp} used the fact that partitions induced by  saturated $\hat{0}$--$\hat{1}$ left-modular chains imply  assumption (2) of Theorem~\ref{mainThmCor2} to prove Stanley's Supersolvability Theorem~\cite{s:sl}. We will use this fact to prove Blass and Sagan's result about LL lattices~\cite{bs:mfl} which is a generalization of the supersolvability result.

In order to explain this result, we need to define the level condition.   Let  $(A_1,A_2,\dots,A_n)$ be induced by $C: \hat{0}=x_0\leq x_1\leq x_2 \leq \dots \leq  x_n=\hat{1}$ .    This multichain also induces a partial ordering on the atoms denoted by $\unlhd$.  It is defined by saying  $a\lhd b$ if $a\in A_i$ and $b\in A_j$ with $i<j$.  
We say that a lattice $L$ with chain $C$ satisfies the  \emph{level condition} if
$$
a \lhd b_1 \lhd b_2 \lhd \dots \lhd b_k 
$$
 implies that
$$
a\not\leq \bigvee_{i=1}^k b_i.
$$
 
  The lattice $L$  is called an \emph{LL lattice} if it contains a left-modular multichain $C$ and $L$ together with $C$ satisfy the level condition.  We are now in a position to state Blass and Sagan's result.

\begin{thm}[\cite{bs:mfl}]
Let $L$ be a lattice and let $(A_1, A_2, \dots, A_n)$ be induced by a left-modular saturated multichain such that $L$ is an LL lattice.  Let $\rho$ be generalized rank and let $m$ be the length of the longest $\hat{0}$--$\hat{1}$ chain.  Then
$$
\chi(L,t) = t^{m-n}\prod_{i=1}^n (t-|A_i|).
$$
\end{thm}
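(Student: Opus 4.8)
The plan is to obtain this as an immediate consequence of Theorem~\ref{mainThmCor2}, applied to $L$ with the given ordered partition $(A_1,\dots,A_n)$ and the complete transversal function $f(\bfm{t})=\vee\bfm{t}=t_1\vee\cdots\vee t_n$ (recall that this is a complete transversal function whenever $L$ is a lattice). Taking $\rho$ to be generalized rank, conclusion~(c) of that theorem is exactly the asserted formula, so everything reduces to verifying hypotheses~(1) and~(2) of Theorem~\ref{mainThmCor2} together with the requirement $m\ge\rho(L)$.

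Hypothesis~(2) is essentially already in hand: $(A_1,\dots,A_n)$ is induced by the left-modular saturated multichain $C$, such multichains satisfy the meet condition, and, as recalled above from~\cite{hs:fcp}, for a partition induced by a multichain the meet condition is equivalent to hypothesis~(2). For $m\ge\rho(L)$, delete the repeated entries from $C$ to obtain a $\hat{0}$--$\hat{1}$ chain, whose length is at most $m$; since $A_i=\emptyset$ whenever $x_{i-1}=x_i$, the number of nonempty blocks is at most that length, and $\rho(x)$ only ever counts indices $i$ with $A_i\neq\emptyset$, so $\rho(L)=\max_{x\in L}\rho(x)\le m$.

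The real work is hypothesis~(1): if $\bfm{t}\in\mathcal{T}_x^a$ has support $\{i_1<i_2<\cdots<i_k\}$, so that $x=t_{i_1}\vee\cdots\vee t_{i_k}$ with each $t_{i_\ell}$ an atom of $A_{i_\ell}$, then $\rho(x)=k$. That $\rho(x)\ge k$ is clear, since $x$ lies above an atom of each of the $k$ distinct blocks $A_{i_1},\dots,A_{i_k}$. For $\rho(x)\le k$ I must rule out an atom $b\le x$ with $b\in A_j$ and $j\notin\{i_1,\dots,i_k\}$. If $j>i_k$, then $t_{i_\ell}\le x_{i_\ell}\le x_{j-1}$ for every $\ell$, hence $b\le x\le x_{j-1}$, contradicting $b\in A_j$. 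If $j<i_1$, then $b\lhd t_{i_1}\lhd\cdots\lhd t_{i_k}$ in the atom order induced by $C$, so the level condition forces $b\nleq t_{i_1}\vee\cdots\vee t_{i_k}=x$. In the remaining ``interleaved'' case $i_\ell<j<i_{\ell+1}$, I would split $x=u\vee w$ with $u=t_{i_1}\vee\cdots\vee t_{i_\ell}\le x_{j-1}$ and $w=t_{i_{\ell+1}}\vee\cdots\vee t_{i_k}$, so that $u\le x_{j-1}$ but $b\nleq x_{j-1}$, and use left-modularity of the chain element $x_{j-1}$ to reduce to the case $b\le w$; then $b\lhd t_{i_{\ell+1}}\lhd\cdots\lhd t_{i_k}$ and the level condition gives the desired contradiction. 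With hypothesis~(1) secured, Theorem~\ref{mainThmCor2}(c) gives $\chi(L,t)=t^{m-n}\prod_{i=1}^n(t-|A_i|)$.

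I expect the interleaved case of hypothesis~(1) to be the main obstacle: one-sided left-modularity of $x_{j-1}$ does not by itself yield the modular-law identity $x_{j-1}\wedge(u\vee w)=u\vee(x_{j-1}\wedge w)$ that would make ``$b\le w$'' transparent, so this step will likely need a more careful induction that peels the atoms $t_{i_\ell}$ off one at a time while tracking which block each lies in, or an appeal to the meet condition, so as to put the join in a form where the level condition applies directly.
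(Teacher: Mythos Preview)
Your overall strategy is identical to the paper's: apply Theorem~\ref{mainThmCor2} with the join map $f(\bfm{t})=\vee\bfm{t}$, verify hypothesis~(2) via the meet condition for left-modular saturated multichains (as established in~\cite{hs:fcp}), and check $m\ge\rho(L)$ from the chain underlying the multichain. The paper does exactly this.

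The one divergence is hypothesis~(1). The paper does not reprove it; it simply cites \cite[Theorem~6.3 and Lemma~6.4]{bs:mfl}, where Blass and Sagan show (in different language) that for an LL lattice the join of atoms $t_{i_1},\dots,t_{i_k}$ from distinct blocks lies above atoms of precisely those $k$ blocks. You instead attempt a direct argument and correctly isolate the interleaved case $i_\ell<j<i_{\ell+1}$ as the crux. Your own final paragraph is right that the naive splitting $x=u\vee w$ does not go through: left-modularity of $x_{j-1}$ gives identities of the form $y\vee(x_{j-1}\wedge z)=(y\vee x_{j-1})\wedge z$ for $y\le z$, not the modular-law identity you would need to force $b\le w$. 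So as written there is a genuine gap.

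The gap is fillable along the lines you suggest. The Blass--Sagan argument proceeds by induction on $k$, peeling off the top atom and using left-modularity of the chain element at the appropriate level to control where the remaining join sits; the level condition is then invoked at each stage rather than once at the end. If you want a self-contained proof rather than a citation, that inductive scheme is what you should implement; otherwise, citing \cite[Lemma~6.4]{bs:mfl} for hypothesis~(1), as the paper does, closes the argument.
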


Before we prove the theorem, let us note that it is possible to have $n>m$ in which case the exponent on the outside of the factorization will be negative.  This is possible since we are using multichains and so  repeating elements in the chain will give rise to as many empty blocks in the  partition of the atom set as we wish. However,  for  each such block, we get a corresponding factor $(t-0)$.  Thus $\chi(L,t)$ is still a polynomial since the negative power of $t$ on the outside of the product will be canceled by the positive powers of $t$ on the inside of the product.

\begin{proof}
We wish to use Theorem~\ref{mainThmCor2}. First, note that since we are using generalized rank  we have that $\rho(P)$ is at most the number of nonempty blocks in the partition.  Since our partition is induced by a multichain and since $m$ is the length of the largest chain in the lattice, we have that $\rho(P)\leq m$.   

Define the complete transversal function to be $f(\bfm{t})=\vee \bfm{t}$.  Although it is not worded in the same way, the authors in~\cite[Theorem  6.3 and Lemma 6.4] {bs:mfl}  proved assumption (1) of  Theorem~\ref{mainThmCor2} holds.  Finally, as noted before, it was shown in~\cite{hs:fcp}
 that saturated left-modular multichains satisfy the meet condition and so satisfy assumption (2) of  Theorem~\ref{mainThmCor2}.  
\end{proof}

The theorems presented so far have provided conditions which imply factorization.  We would like to finish this section with a  theorem where we provide a condition which is equivalent to factorization.

\begin{thm} \label{bigThmIff}
Let $P$ be a poset and let  $\rho: P \rightarrow \mathbb{N}$ with $m\in \mathbb{N}$ such that $\rho(P)\leq m$. Let $\chi (P,t)$ be the characteristic polynomial with respect to $\rho$ and $m$. Let  $(A_1, A_2, \dots, A_n)$ be an ordered partition of $A(P)$ and let  $f: \prod_{i=1}^nRT_{\hat{U}(A_i)}\rightarrow P$ be a complete transversal function. Finally,  define
$$
T=\{x\in P\setminus \hat{0} :\  |A_i\cap A_x|\neq 1 \mbox{ for all } i\}.
$$ 
Suppose that the following hold.
\begin{enumerate}
\item If $\bfm{t}\in \mathcal{T}_x^a$ then $|\supp(\bfm{t})|=\rho(x)$.
\item If $x, y \in P$ and $x<y$, then $\rho(x)<\rho(y)$.
\item For all minimal elements $x  ,y\in T$, the cardinality of the sets
$$
\{i :\ |A_i\cap A_x| \neq 0\} \mbox{ and } \{i :\ |A_i\cap A_y| \neq 0\}
$$
have the same parity.
\end{enumerate}

Under these conditions, 
$$
\chi(P,t)= t^{m-n}\prod_{i=1}^n (t- |A_i|)
$$
if and only if  for  every nonzero $x\in P$  there is an index $i$ such that $|A_i\cap A_x|=1$.
\end{thm}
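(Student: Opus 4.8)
The statement is an "if and only if" where the "if" direction is essentially Theorem~\ref{mainThmCor2}: if every nonzero $x$ sits above exactly one atom of some $A_i$, then condition (2) of that theorem holds (conditions (1) there being exactly hypothesis~1 here), and the factorization follows. So the real content is the contrapositive of the "only if": assuming $T\neq\emptyset$ (i.e.\ some nonzero $x$ is above a non-singleton intersection with every block), I must show the characteristic polynomial does \emph{not} equal $t^{m-n}\prod_{i=1}^n(t-|A_i|)$. The natural way to detect this is to look at a single coefficient of $\chi(P,t)$ and show it has the wrong value.

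**Which coefficient.** The plan is to pick $k$ to be the minimal support size of an atomic transversal of an element of $T$ — equivalently, since hypothesis~1 identifies $|\supp\bfm{t}|$ with $\rho(x)$, the minimal value of $\rho$ on $T$ — and compare the coefficient of $t^{m-k}$ on both sides. On the product side this coefficient is $(-1)^k e_k(|A_1|,\dots,|A_n|)$. On the $\chi(P,t)$ side, by the definition \eqref{defChiEq} and Theorem~\ref{mainThm}(b) (whose hypotheses I should check still apply — hypothesis~1 gives condition~(2) of Theorem~\ref{mainThm}, Lemma~\ref{completeTreeLem}(c) gives condition~(1), but condition~(3), the summation condition, is \emph{not} assumed, so I cannot invoke Theorem~\ref{mainThm} wholesale), the coefficient is $\sum_{\rho(x)=k}\mu(x)$. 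The cleaner route is to use Lemma~\ref{completeTreeLem}(d), which holds unconditionally for complete transversal functions: $\sum_{\bfm s\in L(\mathcal T_x)}\mu(\bfm s)=\prod_i(1-N_i)$ where $N_i=|A_i\cap A_x|$. For $x\notin T\cup\{\hat 0\}$ some $N_i=1$ so this product is $0$; for $x\in T$ all nonzero $N_i$ are $\geq 2$, so each factor $(1-N_i)$ is nonzero (in fact $\leq -1$). Hypothesis~2 ($x<y\Rightarrow\rho(x)<\rho(y)$) is what lets me localize: it forces the minimal elements of $T$ to be precisely the elements of $T$ of minimal rank $k$, and ensures nothing of rank $<k$ lies in $T$, so that the "defect" first shows up at degree $m-k$.

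**Assembling the coefficient comparison.** Using the standard identity that $\sum_{z\le x}\mu(z)=\delta_{\hat 0,x}$ can be run in the quotient/product setting, I would express $\sum_{\rho(x)=k}\mu(x)$ via the transversal picture: a Möbius-style sum over atomic transversals of support size $k$, split according to whether the underlying element is in $T$ or not. Elements not in $T$ contribute the "expected" amount — this is exactly the mechanism that produces $e_k$ in the proof of Theorem~\ref{mainThm}(c). The elements of $T$ of rank $k$ (the minimal elements of $T$, by hypothesis~2) contribute an extra, and hypothesis~3 — that all minimal elements of $T$ have $|\{i: A_i\cap A_x\neq\emptyset\}|$ of the same parity — guarantees these extra contributions all carry the \emph{same sign} $(-1)^{(\text{that parity})}$, so they cannot cancel against each other. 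Since $T$ is nonempty, the set of its minimal elements of rank $k$ is nonempty, and each contributes a strictly nonzero amount of a fixed sign; hence $\sum_{\rho(x)=k}\mu(x)\neq (-1)^k e_k(|A_1|,\dots,|A_n|)$, so the degree-$(m-k)$ coefficients differ and the factorization fails.

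**Main obstacle.** The delicate step is making the coefficient extraction rigorous \emph{without} the summation condition: Lemma~\ref{sumLem} and hence Theorem~\ref{mainThm}(b),(c) are unavailable for the elements of $T$, so I cannot simply say $\mu(x)=(-1)^{\rho(x)}|\mathcal T_x^a|$ across the board. The workaround is to stay at the level of Lemma~\ref{completeTreeLem}(d), which is unconditional, and to run an inclusion–exclusion / Möbius inversion over ranks $0,1,\dots,k$ using hypothesis~2 to guarantee that the lower-rank strata behave exactly as in the clean case (each such $x\notin T$ has some $N_i=1$, forcing the relevant local sums to vanish or match the elementary-symmetric-function bookkeeping). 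Getting the indexing of this inversion exactly right — so that the first discrepancy is pinned to degree $m-k$ and its sign is controlled by hypothesis~3 — is where the care is needed; everything else is the routine manipulation already rehearsed in the proof of Theorem~\ref{mainThm}.
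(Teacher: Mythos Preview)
Your proposal is correct and follows essentially the same route as the paper: the ``if'' direction is Theorem~\ref{mainThmCor2}, and for the contrapositive of the ``only if'' you take $k=\min\{\rho(x):x\in T\}$ and show the coefficient of $t^{m-k}$ in $\chi(P,t)$ differs from that of $t^{m-n}\prod(t-|A_i|)$, using Lemma~\ref{completeTreeLem}(d) to isolate a nonzero correction term of fixed sign (by hypothesis~(3)) coming from the rank-$k$ elements of $T$.

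Two small points. First, your claim that ``the minimal elements of $T$ are precisely the elements of $T$ of minimal rank $k$'' is only half right: rank-$k$ elements of $T$ are minimal in $T$ (by hypothesis~(2)), but $T$ may have minimal elements of higher rank. Only the former containment is needed, since hypothesis~(3) then applies to the rank-$k$ elements. Second, your ``delicate step'' is exactly where the paper does something concrete that you leave vague: it passes to the quotient $R=\bigl(\prod RT_{\hat U(A_i)}\bigr)/\ker f\cong P$, truncates to the subposet $Q$ of elements with $\rho\le k$ (so M\"obius values for $\rho\le k$ agree with $R$ by hypothesis~(2)), observes that every non-maximal element of $Q$ lies outside $T$ and hence satisfies the summation condition by Lemma~\ref{completeTreeLem}(e), and then invokes an external lemma from~\cite{hs:fcp} giving $\mu(\mathcal T_{x_i})=\sum_{\bfm t\in\mathcal T_{x_i}}\mu(\bfm t)-c_i$ with $c_i=\sum_{\bfm s\in L(\mathcal T_{x_i})}\mu(\bfm s)$. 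This is the precise form of the ``M\"obius inversion over ranks $0,\dots,k$'' you gesture at.
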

 
This theorem is a generalization of Theorem 17 shown in~\cite{hs:fcp}. The two proofs are quite similar so we only provide a sketch below.

\begin{proof}[Sketch of proof]
First, note that the backwards  direction is Theorem~\ref{mainThmCor2}.  For the forward direction, we will prove the contrapositive.   Note that the assumption in this direction implies that $T\neq\emptyset$.   Let $k$ be the smallest value of $\rho$ applied to the elements of $T$.  We show that the coefficient of $t^{m-k}$ in $\chi(P,t)$ and in $t^{m-n}\prod_{i=1}^n (t- |A_i|)$ are different.

Define $R =  \left( \prod_{i=1}^n RT_{\hat{U}(A_i)}\right)/\ker f$.  We claim that $R$ is a homogeneous quotient and that $P\cong R$. Since $f$ is a complete transversal function, Lemma~\ref{completeTreeLem} part (c) implies that assumption (1) of Theorem~\ref{mainThm} is satisfied.  Note that the proof of part (a) of Theorem~\ref{mainThm} only requires assumption (1).  Therefore, $R$ is homogeneous and $P\cong R$.  Since $P\cong R$, it is enough to show that the coefficient of $t^{m-k}$ in $\chi( R,t)$ and in $t^{m-n}\prod_{i=1}^n (t- |A_i|)$ are not the same.

Let $x_1,x_2, \dots, x_l$ be the set of elements of $T$ with $\rho(x_i)=k$ for all $i$ and let $S=\{\mathcal{T}_{x_1},\mathcal{T}_{x_2},\dots, \mathcal{T}_{x_l}\}$ be the corresponding equivalence classes. Moreover, define $Q$ to be the poset obtained from $R$ by removing all elements of $R$ with $\rho$ value larger than $k$. Using assumption (2), we can see that the M\"obius  value of elements with $\rho$ at most $k$ in $R$ and $Q$ are the same. In $Q$ all elements with $\rho$ value $k$ are maximal. By assumption (2) and the assumption on $k$ any element of $Q$ which is not maximal cannot be in the set $T$.  Then Lemma~\ref{completeTreeLem} part (e)  implies that every non-maximal element satisfies the summation condition~\ree{sumCondEq}.  Thus, we can apply Lemma 14 in~\cite{hs:fcp} to conclude that
$$
\mu(\mathcal{T}_{x_i}) = \sum_{\bfm{t} \in \mathcal{T}_{x_i}} \mu(\bfm{t}) -\sum_{\bfm{s} \in L(\mathcal{T}_{x_i})} \mu(\bfm{s}).
$$
Let 
$$
c_i=\sum_{\bfm{s} \in L(\mathcal{T}_{x_i})} \mu(\bfm{s}).
$$
We claim that all the $c_i$'s are either 0 or have the same sign. By equation~\ree{sumOfMob}, if $c_i\neq 0$, then the sign of $c_i$ is $(-1)^{k_i}$ where $k_i$ is the number of blocks with atoms below $x_i$. 
 By assumption (3), for the $c_i$'s which are not equal to 0, the corresponding $k_i$'s have the same parity.  Therefore, the signs of the nonzero $c_i$'s are the same. Since there is at least one element of $T\neq\emptyset$ in $Q$, there is at least one $c_i\neq 0$.

Using the same argument as in the proof of Theorem 17 of~\cite{hs:fcp}, we get the coefficient of $t^{m-k}$ in $\chi(R,t)$ is 
$$
\sum_{|\supp \bfm{t}|=k} \mu(\bfm{t}) - \sum_{i=1}^l c_i
$$
in which the first sum  ranges over atomic transversals.  Since there is at least one $c_i$ which is nonzero and all the nonzero $c_i$'s have the same sign, we see that this coefficient is not the same as
$$
\sum_{|\supp\bfm{t}|=k} \mu(\bfm{t}).
$$
However,   the previous expression  is the coefficient of $t^{m-k}$ in $t^{m-n}\prod_{i=1}^n (t- |A_i|)$.  It follows that
$$
\chi(P,t) \neq  t^{m-n}\prod_{i=1}^n (t- |A_i|)
$$
which is what we wished to show.
\end{proof}

\section{Classic Results About the  M\"obius Function}\label{mobFunSec}

In this section, we will give a new method to prove an array of classic results about the M\"obius function. The idea of the method is to use induction on the size of the poset.  In order to do this, we will collapse a coatom and the $\hat{1}$ of the poset.

 We begin with a lemma that explains the simple nature of the values of $\mu$ for the original poset and the poset obtained by collapsing a coatom and $\hat{1}$. In the lemma and throughout the rest of the section, we will use $[x]$ to denote the equivalence class which contains $x$.

\begin{lem}\label{coatomAndOneCollapse}
Let $P$ be a poset with a $\hat{0}$ and $\hat{1}$ and at least 3 elements.  Suppose $c$ is a coatom and let $\sim$ be the equivalence relation identifying $c$ and $\hat{1}$.   Then $P/\sim$ is homogeneous and 
$$
\mu([\hat{1}])=\mu(c)+\mu(\hat{1}).
$$
Moreover, if $P$ is a lattice, then $P/\sim$ is a lattice with $[x]\vee[y]=[x\vee y]$ for all $x,y\in P$ and $[x]\wedge [y]=[x\wedge y]$ provided $[x],[y]\neq [\hat{1}]$.
\end{lem}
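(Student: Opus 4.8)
The plan is to verify the three conclusions of Lemma~\ref{coatomAndOneCollapse} in turn, relying on Definition~\ref{homogenQuoDefi} and Lemma~\ref{sumLem}. The equivalence relation $\sim$ has exactly one nontrivial class, namely $[\hat{1}]=\{c,\hat{1}\}$; every other class is a singleton. To see $P/\sim$ is homogeneous, note first that $\hat{0}$ is alone in its class since $P$ has at least $3$ elements (so $\hat{0}\notin\{c,\hat{1}\}$), giving condition (1) of Definition~\ref{homogenQuoDefi}. For condition (2), I would observe that the only way the ``lifting'' property could fail is for the class $[\hat{1}]$: if $[x]\le[\hat{1}]$ in the quotient for some $x$, I need to show that for every representative of $[x]$ (there is only one unless $x\in\{c,\hat{1}\}$) there is an element of $\{c,\hat{1}\}$ above it in $P$. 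But $x\le\hat{1}$ always holds in $P$, so $x$ lies below $\hat{1}\in[\hat{1}]$; and for any other class $[y]=\{y\}$ with $y$ not a maximal element, $[y]$ fails to be below only $[\hat{1}]$, handled just now. This gives condition (2).

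Next, for the M\"obius identity, the key point is that $P/\sim$ satisfies the summation condition~\ree{sumCondEq}: for every nonzero class $X$, the lower order ideal $L(X)$ generated by $X$ in $P$ is a proper principal (or union-of-principal) order ideal not containing $\hat{1}$ unless $X=[\hat{1}]$, and in either case $\sum_{y\in L(X)}\mu(y)=0$. Concretely, if $X=\{x\}$ is a singleton nonzero class then $L(X)=\{y:y\le x\}$ and the defining recurrence for $\mu$ gives $\sum_{y\le x}\mu(y)=\delta_{\hat{0},x}=0$; if $X=[\hat{1}]=\{c,\hat{1}\}$ then $L(X)=\{y:y\le c\}\cup\{y:y\le\hat{1}\}=P$ (since $\hat{1}$ is the top), so the sum is $\sum_{y\in P}\mu(y)=\delta_{\hat{0},\hat{1}}=0$, again using that $P$ has more than one element. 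Hence the summation condition holds, Lemma~\ref{sumLem} applies, and $\mu([\hat{1}])=\sum_{x\in[\hat{1}]}\mu(x)=\mu(c)+\mu(\hat{1})$. For all singleton classes $[x]$, Lemma~\ref{sumLem} likewise gives $\mu([x])=\mu(x)$, though only the $[\hat{1}]$ statement is needed here.

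For the lattice statement, suppose $P$ is a lattice. I would first check $[x]\vee[y]=[x\vee y]$: in the quotient, $[x\vee y]$ is an upper bound of $[x]$ and $[y]$ since $x,y\le x\vee y$ in $P$; conversely if $[z]$ is any upper bound of $[x],[y]$ in $P/\sim$, then (using condition (2) of homogeneity, or directly) one finds representatives giving $x\le z'$ and $y\le z''$ with $z',z''\in[z]$, and since $[z]$ is either a singleton or equals $\{c,\hat{1}\}$ — in which case $\hat{1}\in[z]$ is above everything — we get $x\vee y\le\hat{1}$ and more precisely $[x\vee y]\le[z]$; a short case analysis (is $[z]$ the top class or not?) pins down that $[x\vee y]$ is the least upper bound. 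The meet statement $[x]\wedge[y]=[x\wedge y]$ for $[x],[y]\ne[\hat{1}]$ is where I expect the only real subtlety: when $x,y$ are honest elements of $P$ with $x,y\notin\{c,\hat{1}\}$, I must show no ``accidental'' larger lower bound is created by the collapse. A lower bound $[w]$ of $[x],[y]$ in $P/\sim$ with $[w]\ne[\hat{1}]$ is a singleton $\{w\}$ with $w\le x$ and $w\le y$ in $P$, hence $w\le x\wedge y$, so $[w]\le[x\wedge y]$; and $[w]=[\hat{1}]$ cannot be a lower bound of $[x],[y]$ since that would force $c\le x$ or $\hat{1}\le x$, impossible as $x$ is not $\hat{1}$ and $x\ne c$ while $c$ is a coatom (if $c\le x<\hat 1$ then $c=x$). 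Thus $[x\wedge y]$ is the meet. I would present these as brief case checks rather than grinding every subcase, flagging the meet case as the one genuinely needing the hypothesis $[x],[y]\ne[\hat{1}]$, since $[\hat{1}]\wedge[\hat{1}]$ or $[\hat{1}]\wedge[x]$ can indeed differ from what $\wedge$ in $P$ would suggest.
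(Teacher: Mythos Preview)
Your argument is correct and follows essentially the same path as the paper for homogeneity and for the M\"obius identity: both verify the summation condition~\ree{sumCondEq} class by class and then invoke Lemma~\ref{sumLem}. The one place your presentation differs from the paper is the lattice statement. The paper dispatches this by first observing that $P/\sim$ is isomorphic to the subposet $P\setminus\{c\}$, and then checks joins and meets there (so the only nontrivial join case is $x\vee y=c$, where $\hat{1}$ is forced to be the join in $P\setminus\{c\}$). You instead argue directly in the quotient, doing a short case analysis on whether the competing bound $[z]$ (for joins) or $[w]$ (for meets) equals $[\hat{1}]$. Both routes are fine; the paper's isomorphism shortcut is slightly slicker, while your direct check makes more explicit exactly where the hypothesis $[x],[y]\neq[\hat{1}]$ is used in the meet formula.
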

\begin{proof}
First, let us show that $P/\sim$ is homogeneous.  Since there are at least 3 elements and we are collapsing a coatom and $\hat{1}$, we have that $\hat{0}$ is in its own equivalence class.  Now suppose that $[x]<[y]$.  It follows that  $[x]\neq \{c,\hat{1}\}$ since $[x]<[y]$ and  $[c]=[\hat{1}]$ is the $\hat{1}$ of the quotient.  Therefore, $[x]=\{x\}$ and so it is obvious that $P/\sim$ is a homogeneous quotient.

To show that $\mu([\hat{1}])=\mu(c)+\mu(\hat{1})$ note that since every element of $P$ is below $\hat{1}$ and every other equivalence class has only one element, we get  
$$
\sum_{y\in L([x])}\mu(y)= \sum_{y\leq x}\mu(y) = 0
$$
for all nonzero $x\neq c$.
By Lemma~\ref{sumLem} this implies that 
$$
\mu([\hat{1}])=\mu(c)+\mu(\hat{1})
$$
which is what we wished to prove.

Now suppose that $P$ is a lattice.   It is not hard to see that $(P/\sim)\hspace{4 pt} \cong (P\setminus \{c\})$. Therefore, if $x\vee y\neq c$, we immediately get that 
$[x]\vee[y]$ exists and $[x]\vee[y]=[x\vee y]$.  If $x\vee y=c$, then $\hat{1}$ is the  only  element in $P\setminus \{c\}$ which is an upper bound for both $x$ and $y$.  It follows that $[x]\vee[y]=[\hat{1}]=[c]=[x\vee y]$.  Since $P\setminus \{c\}$ clearly has a $\hat{0}$, we conclude $P/\sim$ is a lattice.    Finally, if $[x],[y]\neq [\hat{1}]$ then $[x]=\{x\}$ and $[y]=\{y\}$ and so $[x]\wedge [y]=[x\wedge y]$.
\end{proof}

Let us now use Lemma~\ref{coatomAndOneCollapse} to prove some classic results.

\begin{cor}[Hall's Theorem~\cite{h:efg}]
Let $P$ be a finite poset, then 
$$
\mu(x,y)=\sum_{i\geq 0} (-1)^i c_i
$$
where $c_i$ is the number of chains of length $i$ which start at $x$ and terminate at $y$.
\end{cor}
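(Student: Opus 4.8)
The plan is to prove Hall's Theorem by induction on the size of the interval $[x,y]$, using Lemma~\ref{coatomAndOneCollapse} as the inductive step. Since $\mu(x,y)$ is computed entirely within the interval $[x,y]$, I may replace $P$ by this interval and thereby assume $P$ has a $\hat{0}=x$ and $\hat{1}=y$; I must then show $\mu(\hat{1})=\sum_{i\ge0}(-1)^i c_i$ where $c_i$ counts length-$i$ chains from $\hat{0}$ to $\hat{1}$. The base cases are small: if $\hat{0}=\hat{1}$ then $\mu=1$ and the only chain is the trivial length-$0$ chain; if $P$ has exactly two elements then $\mu(\hat{1})=-1$ and the only chain from $\hat{0}$ to $\hat{1}$ has length $1$.

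For the inductive step, assume $P$ has at least $3$ elements and pick a coatom $c$. Form $P/\sim$ by identifying $c$ and $\hat{1}$; by Lemma~\ref{coatomAndOneCollapse} this quotient is homogeneous and $\mu([\hat{1}])=\mu(c)+\mu(\hat{1})$, so that
$$
\mu(\hat{1}) = \mu([\hat{1}]) - \mu(c).
$$
The quotient $P/\sim$ is (isomorphic to) $P\setminus\{c\}$, which is strictly smaller, and $\mu(c)$ is computed in the interval $[\hat{0},c]$, also strictly smaller. So I can apply the inductive hypothesis to both terms: $\mu([\hat{1}])=\sum_i(-1)^i d_i$ where $d_i$ counts length-$i$ chains from $\hat{0}$ to $[\hat{1}]$ in $P\setminus\{c\}$, and $\mu(c)=\sum_i(-1)^i c_i'$ where $c_i'$ counts length-$i$ chains from $\hat{0}$ to $c$ in $P$.

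The key combinatorial observation is a bijective bookkeeping of chains. A saturated-free (arbitrary) chain from $\hat{0}$ to $\hat{1}$ in $P$ either (i) avoids $c$, or (ii) passes through $c$, and since $c$ is a coatom any chain through $c$ to $\hat{1}$ ends with the single step $c\cover\hat{1}$. Chains of type (i) of length $i$ correspond exactly to chains of length $i$ from $\hat{0}$ to $[\hat{1}]$ in $P\setminus\{c\}$ that don't secretly pass through the class $[\hat{1}]$ early — more carefully, chains from $\hat{0}$ to $[\hat{1}]$ in the quotient are precisely chains in $P$ from $\hat{0}$ to $\hat{1}$ not using $c$ together with chains from $\hat{0}$ to $c$ not using $\hat{1}$, i.e. $d_i = (c_i \text{ avoiding } c) + c_i'$. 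Meanwhile chains through $c$ of length $i$ biject with length-$(i-1)$ chains from $\hat{0}$ to $c$, contributing $c_{i-1}'$. Assembling: $c_i = (c_i\text{ avoiding }c) + c_{i-1}'$, hence $d_i = c_i + c_i' - c_{i-1}'$. Then
$$
\mu(\hat{1}) = \sum_i(-1)^i d_i - \sum_i(-1)^i c_i' = \sum_i(-1)^i c_i + \sum_i(-1)^i c_i' - \sum_i(-1)^i c_{i-1}' - \sum_i(-1)^i c_i',
$$
and the last three sums telescope to zero (shifting the index in $\sum(-1)^i c_{i-1}'$ gives $-\sum(-1)^i c_i'$). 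The main obstacle is getting this chain-counting identity exactly right — in particular being careful that "chains from $\hat{0}$ to $[\hat{1}]$ in $P/\sim$" splits cleanly into chains ending at $c$ and chains ending at $\hat{1}$ in $P$, using that $[\hat{1}]=\{c,\hat{1}\}$ and every other class is a singleton, so a chain in the quotient lifts uniquely to $P$ except for its final element which may be either $c$ or $\hat{1}$. Once that correspondence is pinned down, the rest is the telescoping above.
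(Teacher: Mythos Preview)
Your overall strategy—induct on $|P|$, collapse a coatom $c$ with $\hat{1}$ via Lemma~\ref{coatomAndOneCollapse}, and apply the inductive hypothesis to both $P/\sim$ and $[\hat{0},c]$—is exactly the paper's approach. The problem is your chain-counting identity for $d_i$, which is wrong and causes the final ``telescoping to zero'' to fail.

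You claim $d_i = (\text{length-}i\text{ chains }\hat{0}\text{--}\hat{1}\text{ avoiding }c) + c_i'$, arguing that a chain in the quotient ending at $[\hat{1}]=\{c,\hat{1}\}$ lifts to a chain in $P$ ending at either $c$ or $\hat{1}$. But this double-counts quotient chains: a single chain $[\hat{0}]<[x_1]<\cdots<[x_{k-1}]<[\hat{1}]$ with $x_{k-1}<c$ lifts to \emph{two} chains in $P$ (one ending at $c$, one at $\hat{1}$), yet it is just one chain in $P/\sim$. The correct bijection is simply between $[\hat{0}]$--$[\hat{1}]$ chains in $P/\sim$ and $\hat{0}$--$\hat{1}$ chains in $P$ avoiding $c$: append $\hat{1}$ as the top (always valid since $x_{k-1}<\hat{1}$). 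So $d_i = a_i$, not $a_i + c_i'$.

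This is exactly why your three auxiliary sums do not telescope to zero: $\sum_i(-1)^i c_i' - \sum_i(-1)^i c_{i-1}' - \sum_i(-1)^i c_i'$ collapses to $\sum_j(-1)^j c_j'=\mu(c)$, which is not zero in general. With the corrected $d_i = a_i = c_i - c_{i-1}'$ you get only two auxiliary sums, and they genuinely cancel after the index shift:
\[
\mu(\hat{1}) \;=\; \sum_i(-1)^i(c_i - c_{i-1}') \;-\; \sum_i(-1)^i c_i' \;=\; \sum_i(-1)^i c_i.
\]
This is precisely the paper's computation, phrased there as $c_i=a_i+b_i$ with $a_i$ the chains avoiding $c$ and $b_i$ the chains through $c$ (so $b_i=c_{i-1}'$).
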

\begin{proof} Without loss of generality we may assume that $x=\hat{0}$ and $y=\hat{1}$ since all chains which start at $x$ and terminate at $y$ are in the interval $[x,y]$. We prove the theorem by inducting on $|P|$.  If $|P|=1$ or $|P|=2$ then the result is obvious.

Now suppose that $|P|>2$.  Let $P/\sim$ be obtained by identifying a coatom $c$ and $\hat{1}$.   Consider the sum
$$
\sum_{i\geq 0} (-1)^i c_i
$$
where $c_i$ is the number of $\hat{0}$--$\hat{1}$ chains of length $i$  in $P$.  Let $a_i$ be the number chains of length $i$ which do not contain $c$ and let $b_i$ be the number chains of length $i$ containing $c$.  Then
$$
\sum_{i\geq 0} (-1)^i c_i = \sum_{i\geq 0} (-1)^i a_i+\sum_{i\geq 0} (-1)^i b_i.
$$
There exists a bijection between $\hat{0}$--$\hat{1}$ chains in $P$ not containing $c$ and $[\hat{0}]$--$[\hat{1}]$ chains in $P/\sim$ which preserves length.  Moreover,  there is a bijection between $\hat{0}$--$\hat{1}$ chains in $P$ containing $c$ and $\hat{0}$--$c$ chains in $[0,c]$.  Note that in this bijection, the chains decrease by one in length.

Since $|P/\sim|<|P|$, using induction we get that 
$$
\mu([\hat{1}]) =  \sum_{i\geq 0} (-1)^i a_i.
$$
Similarly since $|[0,c]|<|P|$ we get that 
$$
\mu(c) = -\sum_{i\geq 0} (-1)^i b_i
$$
where we have multiplied the sum  by $-1$ since the chains have decreased by one in length.

By Lemma~\ref{coatomAndOneCollapse}, we have that
$$
\mu([\hat{1}])=\mu(c)+\mu(\hat{1})
$$
or equivalently
$$
\mu(\hat{1}) = \mu([\hat{1}])-\mu(c).
$$
Therefore,
$$
\mu(\hat{1}) =  \sum_{i\geq 0} (-1)^i a_i+\sum_{i\geq 0} (-1)^i b_i=\sum_{i\geq 0} (-1)^i c_i 
$$
which is what we wished to prove.
\end{proof}

Next, we prove a theorem of Weisner.

\begin{cor}[Weisner's Theorem \cite{w:atifs}]
Let $L$ be a lattice and let $ \hat{0}\neq a\in L$.  If $|L|\geq2$, then
$$
\mu(\hat{1}) =-\sum_{x\neq \hat{1}, x\vee a =\hat{1}}\mu(x).
$$
\end{cor}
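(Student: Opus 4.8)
The plan is to imitate the proof of Hall's Theorem above and induct on $|L|$, collapsing a coatom $c$ together with $\hat{1}$ and invoking Lemma~\ref{coatomAndOneCollapse}, which in the lattice case supplies both the M\"obius identity $\mu(\hat{1}) = \mu([\hat{1}]) - \mu(c)$ and the facts that $P/\sim$ is a lattice with $[x]\vee[a] = [x\vee a]$ for all $x,y$.

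First I would clear away the degenerate cases. If $|L| \leq 2$ the identity is immediate. If $a$ is the unique coatom of $L$, then every $x < \hat{1}$ satisfies $x \leq a$, so the right-hand sum is empty, while $\{y : y \leq \hat{1}\}$ is the disjoint union of $\{y : y \leq a\}$ and $\{\hat{1}\}$; since both $\sum_{y \leq \hat{1}}\mu(y)$ and $\sum_{y \leq a}\mu(y)$ vanish, we get $\mu(\hat{1}) = 0$, matching the empty sum.

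In the remaining case (which includes $a = \hat{1}$) we have $|L|\geq 3$ and $a$ is not the unique coatom, so we may pick a coatom $c$ with $c \neq a$. Let $\sim$ identify $c$ and $\hat{1}$; by Lemma~\ref{coatomAndOneCollapse}, $P/\sim$ is a lattice, $\mu([x]) = \mu(x)$ for every $x \neq c$, $[x]\vee[a] = [x\vee a]$, and $\mu(\hat{1}) = \mu([\hat{1}]) - \mu(c)$. Applying the inductive hypothesis to $P/\sim$ with the element $[a]$ --- legitimate since $[a] \neq [\hat{0}]$ and $|P/\sim| < |L|$ --- and using that the only non-singleton class is $\{c,\hat{1}\}$ (so $[z] = [\hat{1}]$ exactly when $z \in \{c,\hat{1}\}$), we obtain
$$
\mu([\hat{1}]) = -\sum_{\substack{x \neq c,\ x \neq \hat{1} \\ x \vee a \in \{c, \hat{1}\}}} \mu(x).
$$
Now split the index set according to whether $x \vee a = \hat{1}$ or $x \vee a = c$. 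The terms with $x\vee a = c$ form $\sum_{x < c,\ x\vee a = c}\mu(x)$ (note $x\vee a = c$ forces $x \leq c$); this is $0$ when $a \not\leq c$, since then the index set is empty, and it equals $-\mu(c)$ when $a \leq c$, by Weisner's Theorem applied inductively in the strictly smaller lattice $[\hat{0}, c]$ with the element $a$ (here $\mu$ computed in $[\hat{0},c]$ agrees with $\mu$ computed in $L$, since lower order ideals coincide). For the terms with $x\vee a = \hat{1}$, since $c$ is a coatom we have $c \vee a = \hat{1}$ precisely when $a \not\leq c$, so adjoining or not adjoining the index $x = c$ shifts this partial sum by exactly $\mu(c)$ in the direction that is needed; in both subcases the two corrections combine to give $\mu([\hat{1}]) = -\sum_{x \neq \hat{1},\ x\vee a = \hat{1}}\mu(x) + \mu(c)$, and subtracting $\mu(c)$ via the Lemma finishes the proof.

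The real work is confined to that last paragraph: one must carefully track whether the collapsed coatom $c$ belongs to each index set --- which is governed entirely by the dichotomy $a \leq c$ versus $a \not\leq c$ --- and confirm that the spurious $x\vee a = c$ contributions either vanish or cancel exactly against the $\mu(c)$ produced by Lemma~\ref{coatomAndOneCollapse}. The rest, including checking at each use of the inductive hypothesis that the chosen element is nonzero and the ambient lattice is strictly smaller, is routine.
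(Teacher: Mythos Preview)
Your argument is correct and follows essentially the same route as the paper: induct on $|L|$, collapse a coatom with $\hat{1}$ via Lemma~\ref{coatomAndOneCollapse}, and apply the inductive hypothesis both in the quotient $L/\sim$ and in the interval $[\hat{0},c]$. The one difference is your choice of coatom: you pick $c\neq a$ and then split into the cases $a\le c$ and $a\not\le c$ (plus a separate base case when $a$ is the unique coatom), whereas the paper first disposes of $a=\hat{1}$ and then simply chooses a coatom $c$ with $a\le c$, which eliminates the $a\not\le c$ branch and the extra base case entirely.
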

\begin{proof} Let us note that if $a=\hat{1}$, then the result is just restating the definition of $\mu$,  so we assume that $a\neq\hat{1}$ for the rest of the proof. We prove the result by induction. We have already covered the case $|L|=2$, since then $a$ must be $\hat{1}$.

Now suppose that $|L|>2$.  Let $c$ be a coatom such that $a\leq c$.  Consider, the lattice $L/\sim$ obtained by identifying $c$ and $\hat{1}$.  Since $|L/\sim|<|L|$, we get that 
$$
\mu([\hat{1}]) = -\sum_{[x]\neq [\hat{1}], [x]\vee[a] =[\hat{1}]}\mu([x]).
$$
Using the facts that $[\hat{1}]=\{c,1\}$,  $[x]\vee[a]=[x\vee a]$, and  $\mu([x])=\mu(x)$ for $[x]\neq [\hat{1}]$, we obtain,
$$
\mu([\hat{1}]) = -\sum_{x\neq c,\hat{1}, x\vee a =c,\hat{1}}\mu(x).
$$
Since joins are unique, we can break the sum into two parts as,
$$
\mu([\hat{1}]) = -\sum_{x\neq c,\hat{1}, x\vee a =c}\mu(x)  -\sum_{x\neq c,\hat{1}, x\vee a =\hat{1}}\mu(x).
$$
If $x\vee a= c$, then it is clear that $x \in [0,c]$.  Moreover, since $a\leq c$, it is clear that $c\vee a \neq \hat{1}$.  Thus, we can  remove the $x\neq \hat{1}$ condition in the first sum and remove the $x\neq c$ condition in the second.  This gives,
$$
\mu([\hat{1}]) = -\sum_{x\neq c, x\vee a =c}\mu(x)  -\sum_{x\neq  \hat{1}, x\vee a =\hat{1}}\mu(x).
$$
Now the first sum is only over $[0,c]$ and $|[0,c]|<|L|$ so by induction,
$$
\mu([\hat{1}]) =\mu(c) -\sum_{x\neq  \hat{1}, x\vee a =\hat{1}}\mu(x).
$$
Using the fact that $\mu([\hat{1}])=\mu(\hat{1})+\mu(c)$, we  immediately obtain the result.
\end{proof}

Our next corollary will make use of crosscuts.  We remind the reader of the definition here.
\begin{defi}
Let $L$ be a lattice.  A \emph{crosscut} of $L$ is a set $C$ 
with the following properties:
\begin{enumerate}
\item $\hat{0}, \hat{1}\notin C$.
\item $C$ is an antichain.
\item Every maximal $\hat{0}$--$\hat{1}$ chain intersects $C$.
\end{enumerate}
\end{defi}

\begin{thm}[Rota's Crosscut Theorem~\cite{r:ofct}]
Let $L$ be a lattice and let $C$ be a crosscut.  Then
$$
\mu(\hat{1}) = \sum_{\vee B=\hat{1}, \wedge B=\hat{0}} (-1)^{|B|}
$$
where the sum ranges over all $B\subseteq C$ such that $\vee B=\hat{1}$ and $\wedge B=\hat{0}$.
\end{thm}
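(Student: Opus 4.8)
The plan is to run the same inductive scheme used above for Hall's and Weisner's theorems, built on Lemma~\ref{coatomAndOneCollapse}. Write $R(L,C)=\sum_{B\subseteq C,\ \vee B=\hat 1,\ \wedge B=\hat 0}(-1)^{|B|}$ for the right-hand side. I would argue by strong induction on $|L|$, the statement being that $R(M,D)=\mu_M(\hat 1)$ for every finite lattice $M$ and every crosscut $D$ of $M$; the cases $|L|\le 3$ (no crosscut exists when $|L|\le 2$, and both sides vanish when $|L|=3$) begin the induction. In the inductive step I would choose a coatom $c$, collapse $c$ and $\hat 1$ as in Lemma~\ref{coatomAndOneCollapse} so that $\mu(\hat 1)=\mu([\hat 1])-\mu(c)$, and handle $\mu([\hat 1])$ by the inductive hypothesis applied to the smaller lattice $L/\sim$ and $\mu(c)$ by the inductive hypothesis applied to the smaller lattice $[\hat 0,c]$. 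There is a real dichotomy depending on whether some coatom avoids $C$.

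Main case: some coatom $c\notin C$. Using that $C$ is a crosscut, I would first verify that $C$ is again a crosscut of $L/\sim$ (a maximal chain of $L/\sim$ arises from one of $L$ by deleting $c$, and since that chain met $C$ at an element different from $c$, the truncated chain still meets $C$) and that $C^{\flat}:=\{a\in C:a\le c\}$ is a crosscut of $[\hat 0,c]$ (a maximal $\hat 0$--$c$ chain, extended by $c\cover\hat 1$, is a maximal chain of $L$, and as $c\notin C$ it meets $C$ at an element $\le c$); the crosscut hypothesis also forces $|[\hat 0,c]|\ge 3$, since if $c$ were simultaneously an atom the chain $\hat 0\cover c\cover\hat 1$ would avoid $C$. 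The inductive hypothesis now gives $\mu([\hat 1])=R(L/\sim,C)$ and $\mu(c)=R([\hat 0,c],C^{\flat})$. Next I would pass joins and meets through the quotient with Lemma~\ref{coatomAndOneCollapse}: for $B\subseteq C$ one has $\vee_{L/\sim}B=[\hat 1]$ exactly when $\vee_L B\in\{c,\hat 1\}$, and $\wedge_{L/\sim}B=\hat 0$ exactly when $\wedge_L B=\hat 0$. Splitting $R(L/\sim,C)$ into the contribution with $\vee_L B=\hat 1$ and the one with $\vee_L B=c$ (the latter forcing $B\subseteq C^{\flat}$, with the meet and the condition $\vee B=c$ unchanged inside $[\hat 0,c]$) produces $R(L/\sim,C)=R(L,C)+R([\hat 0,c],C^{\flat})$, and therefore $R(L,C)=\mu([\hat 1])-\mu(c)=\mu(\hat 1)$.

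Remaining case: every coatom lies in $C$, so, $C$ being an antichain, $C$ is precisely the set of coatoms of $L$. Then $\wedge B=\hat 0$ forces $|B|\ge 2$, and for distinct coatoms $\vee B=\hat 1$ is automatic, so $R(L,C)=\sum_{B\subseteq C,\ \wedge B=\hat 0}(-1)^{|B|}$, which is exactly the lower-crosscut sum $\sum_{B\subseteq A(L^{*}),\ \vee_{L^{*}}B=\hat 1_{L^{*}}}(-1)^{|B|}$ in the dual lattice $L^{*}$. Thus it suffices to establish once and for all the atom version, $\mu(\hat 1)=\sum_{B\subseteq A(L),\ \vee B=\hat 1}(-1)^{|B|}$ for an arbitrary lattice $L$. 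That I would prove by an entirely parallel coatom-collapse induction in which $c$ is taken to be a coatom that is not an atom (one exists unless every coatom of $L$ is an atom, which is handled directly); then no new atoms appear, $A(L/\sim)=A(L)$ and $A([\hat 0,c])=\{a\in A(L):a\le c\}$, and the same split of $\vee_{L/\sim}B=[\hat 1]$ into the cases $\vee_L B=\hat 1$ and $\vee_L B=c$ closes the argument. Applying the atom version to $L^{*}$ then disposes of the remaining case. (Alternatively, one can finish this case by invoking Weisner's theorem in $L^{*}$ at a coatom of $L$ and inducting on the resulting upper intervals.)

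I expect the second case to be the one real difficulty: the coatom collapse destroys the crosscut property precisely when $C$ is the full coatom set, since then the collapsed coatom is the unique point at which some maximal chain meets $C$, so the plain induction stalls and one is forced into the dual lattice and the atom form. The rest is routine, the points that genuinely need care being the checks that the induced sets $C$ and $C^{\flat}$ really are crosscuts of the smaller lattices, and the exclusion of the degenerate interval $|[\hat 0,c]|\le 2$ --- both of which follow from the crosscut hypothesis itself --- together with the direct treatment of the small lattices and of lattices all of whose coatoms are atoms, which serve as the base of the induction.
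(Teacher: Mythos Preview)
Your proposal is correct and follows essentially the same route as the paper: collapse a coatom $c\notin C$ via Lemma~\ref{coatomAndOneCollapse}, apply the inductive hypothesis to $L/\sim$ and to $[\hat 0,c]$, and split the quotient sum according to whether $\vee_L B=\hat 1$ or $\vee_L B=c$. The paper organizes the residual case slightly differently---it first disposes of the case where every coatom is an atom by direct computation, then invokes duality once to assert that one may always assume some coatom lies outside $C$---whereas you run the main induction first and, when $C$ is the full coatom set, dualize and prove the atom version by a second, parallel induction whose own base case is exactly that direct computation; the content is the same. You are more explicit than the paper about checking that $C$ remains a crosscut of $L/\sim$, that $C^\flat$ is a crosscut of $[\hat 0,c]$, and that $|[\hat 0,c]|\ge 3$, all of which the paper passes over in silence.
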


\begin{proof}
 We first consider the special case when every coatom is also an atom. In this case, the crosscut must be the atom set.  Moreover, a subset of the crosscut has meet $\hat{0}$ and join $\hat{1}$ if and only if it has at least two elements.  Therefore, if $L$ has $n$ atoms   we obtain the following
$$
 \sum_{\vee B=\hat{1}, \wedge B=\hat{0}} (-1)^{|B|} = \sum_{|B|\geq 2} (-1)^{|B|} = \sum_{k=2}^n (-1)^k {n\choose k}=n-1.
$$
This agrees with the value of $\mu(\hat{1})$ when $L$ has $n$ atoms and every coatom is an atom. Thus, the result holds in this special case.

Recall that if $L^*$ is the dual lattice of $L$, then $\mu_L(\hat{1}) = \mu_{L^*}(\hat{1})$.  Moreover, in $L^*$, joins and meets reverse roles.    Therefore, if we have a crosscut consisting of only  coatoms, then we can consider the dual lattice.  As a result, we may now assume that there is always at least one coatom in the lattice which is not in the crosscut.  With this in mind we proceed by induction on  $|L|$.  

If $|L|\leq 3$, then it must be that $|L|=3$ since smaller lattices do not have crosscuts.  We have already done the case when $|L|=3$.  Suppose that $|L|>3$ and let $c$ be a coatom that is not in the crosscut.  Consider the lattice $L/\sim$ where we collapse $c$ and $\hat{1}$.  Since $c$ was not in the crosscut we still have the same crosscut.
 By induction, we know that
$$
\mu([\hat{1}])= \sum_{\vee B=[\hat{1}],\wedge B =[\hat{0}]} (-1)^{|B|}.
$$
  Lemma~\ref{coatomAndOneCollapse} implies that   $\vee B = [\hat{1}]$ in $L/\sim$  if and only if $\vee B=c$ or $\vee B =\hat{1}$ in $L$.  Additionally, since $C$ does not contain $c$ nor $\hat{1}$, Lemma~\ref{coatomAndOneCollapse}  also implies that $\wedge B=[\hat{0}]$ in $L/\sim$ if and only if $\wedge B =\hat{0}$ in $L$. Therefore, we can break the previous sum  as follows
$$
\mu([\hat{1}])= \sum_{\vee B=c,\wedge B=\hat{0}} (-1)^{|B|} + \sum_{\vee B=\hat{1},\wedge B=\hat{0}} (-1)^{|B|}.
$$

Note that  if $\vee B=c$, then $B$ must  only have elements in $[\hat{0},c]$.  Thus the first sum in the previous equation is over $B$  contained in $[\hat{0},c]\cap C$ such that $\vee B=c$ and $\wedge B =\hat{0}$.  Since $|[\hat{0},c]|<|L|$, induction implies that
$$
\mu([\hat{1}])= \mu(c) + \sum_{\vee B=\hat{1},\wedge B=\hat{0}} (-1)^{|B|}.
$$
Subtracting $\mu(c)$ from both sides and applying Lemma~\ref{coatomAndOneCollapse} we see that
$$
\mu(\hat{1}) =  \sum_{\vee B=\hat{1},\wedge B=\hat{0}} (-1)^{|B|}
$$
which completes the proof.
\end{proof}

\section{Future Work}

We saw in Proposition~\ref{tamMobProp} that the M\"obius function for the Tamari lattice has a  description which is similar to that of the divisor lattice.  In the future, we hope that we can show this fact by exhibiting a quotient of the divisor lattice which preserves the M\"obius function.  More generally, it would be nice to find other examples of unranked posets whose M\"obius function behaves like that of a ranked poset.

It is easy to see that given a partition of the atom set of a poset and an element $x$, the elements of the set $\mathcal{T}_x $ can be viewed as the facets of an abstract simplicial complex $\Delta(\mathcal{T}_x )$. This complex encodes some useful information. For example, if $\bfm{t}\in \mathcal{T}_x^a$ then as an element of the product of rooted trees  $\mu(\bfm{t})= (-1)^{|\supp\bfm{t}|}$.  Therefore  
$$
\sum_{\bfm{t} \in L(\mathcal{T}_x)} \mu(\bfm{t}) = \sum_{\bfm{t} \in L(\mathcal{T}^a_x)} (-1)^{|\supp\bfm{t}|}.
$$
Since the dimension of $\bfm{t}$ in $\Delta(\mathcal{T}_x)$ is $|\supp \bfm{t}| -1$ the previous equation can be rewritten as
$$
\sum_{\bfm{t} \in L(\mathcal{T}_x)} \mu(\bfm{t}) = \sum_{\bfm{t} \in L(\mathcal{T}^a_x)} (-1)^{\dim \bfm{t}+1}.
$$
This is equivalent to
$$
\sum_{\bfm{t} \in L(\mathcal{T}_x)} \mu(\bfm{t}) = - \sum_{\bfm{t} \in L(\mathcal{T}^a_x)} (-1)^{\dim \bfm{t}}.
$$
One can see that the right-hand side of the previous equation is the negative of the reduced Euler characteristic of  $\Delta(\mathcal{T}^a_x)$.  It follows that the summation condition~\ree{sumCondEq} is satisfied if and only if for each $x$ the reduced Euler characteristic of $\Delta(\mathcal{T}_x^a)$ is $-\delta_{\hat{0},x}$.  
We also note that if we restrict to the set of atomic transversals for $x$, $\Delta(\mathcal{T}_x^a)$ is pure of dimension $\rho(x)-1$ if and only if condition (2) of Theorem~\ref{mainThm} holds. We are interested in investigating what else this complex can tell us about the poset.

 \emph{Acknowledgement.}
The author would like to thank Bruce Sagan for his helpful discussions as well as his help preparing the manuscript.

\end{document}